\documentclass{article}

\usepackage[letterpaper,top=2cm,bottom=2cm,left=3cm,right=3cm,marginparwidth=1.75cm]{geometry}

\usepackage{amsmath,amsfonts,amssymb,amsthm,booktabs,color,epsfig,graphicx,hyperref,url}

\usepackage[
    backend=biber,
    maxbibnames=99
]{biblatex}
\usepackage{enumerate}
\usepackage{enumitem}
\usepackage{fullpage}

\theoremstyle{plain}
\newtheorem{Thm}{Theorem}[section]
\newtheorem{Pro}[Thm]{Proposition} 
\newtheorem{Lem}[Thm]{Lemma}
\newtheorem{Ass}{Assumption}

\newtheorem{Cor}[Thm]{Corollary}
\newtheorem{Eg}[Thm]{Example}
\newtheorem{Rem}[Thm]{Remark}
\newtheorem{Def}[Thm]{Definition}

\newcommand{\cl}{\mathcal}

\newcommand{\mbf}{\mathbf}
\newcommand{\bb}{\mathbb}

\newcommand{\E}{\mathbb{E}}
\newcommand{\Cov}{\mathrm{Cov}}

\newcommand{\EqD}{\overset{d}{=}}

\def\pp#1{ \left(#1\right) }
\def\pb#1{ \left[#1\right] }
\def\pc#1{ \left\{#1\right\} }
\def\Prt#1{ \mathbb{P} \pp{#1} }

\title{Conditional Independence under Infinite Measures and Poisson Point Processes}
\author{
Shuyang Bai\\
bsy9142@uga.edu\\
University of Georgia
\and
Vishal Routh\\
v.routh@uga.edu\\
University of Georgia
}

\begin{document}
\maketitle

\begin{abstract}
{
We study conditional independence under infinite measures on punctured product spaces, a notion recently introduced for graphical modeling in multivariate extremes and L\'evy processes. In contrast to classical probabilistic conditional independence, this concept is formulated through normalized restrictions of an infinite measure that reflects the non-product structure of the punctured space. We show that this non-standard notion admits a natural probabilistic characterization: it is equivalent to classical conditional independence between coordinate projections of a Poisson point process defined on the punctured   space with the given infinite measure as its mean measure. In addition, we provide a functional characterization of the conditional independence concept at the level of the enumerated points of the Poisson point process. We further extend the framework from punctured Euclidean product spaces to a more general abstract setting, thereby broadening its scope of potential applications.}
\end{abstract}
\textbf{Keywords}:
conditional independence, graphical model, infinite measure, multivariate extremes, Poisson point process

\noindent\textbf{MSC 2020 Classification}:   60G55, 62A09.

\section{Introduction and overview of results}\label{Sec:intro}

Recently, a notion of conditional independence under an infinite measure defined on the punctured product space 
$\mathbb{R}^V \setminus \{0_V\}$, $V=\{1,\ldots,d\}$,  $d \in \mathbb{Z}_+=\{1,2,\ldots\}$, where  $0_V$ denotes the origin of 
$\mathbb{R}^V$,  has been developed by 
\cite{engelke2025graphical}, extending the pioneering work of 
\cite{engelke2020graphical}. 
Owing to the infinite total mass of the measure 
and the non-product structure of the underlying space, the formulation of this 
conditional independence is necessarily non-standard and differs from classical 
probabilistic conditional independence. We recall the main definition as follows.  

Write
$$E_V^o:=\mathbb{R}^V \setminus \{0_V\}$$ 
Throughout the paper, a superscript circle $\circ$ indicates that the corresponding object is associated with a punctured space. 
Suppose $\Lambda^o_V$ is a Borel measure on $E_V^o$
 satisfying
\[
\Lambda^o_V(G) < \infty 
\quad \text{for all } G \in \cl{B}(\mathbb{R}^V) 
\text{ such that } 0_V \notin \overline{G},
\]
where $\overline{G}$ denotes the closure of $G$, and we write $\cl{B}(T)$ for the Borel $\sigma$-field of a 
topological space $T$ here and below.
Define
\begin{equation}\label{eq:R test class}
\mathcal{R}\left(\Lambda^o_V\right)
=\pc{R=\prod_{v\in V} R_v : 
R_v \in \cl{B}(\mathbb{R}),\ \Lambda^o_V(R)>0,\ 
0_V \notin \overline{R}}.
\end{equation}
Each element of $\mathcal{R}\left(\Lambda_V^o\right)$ is a product-form subspace of 
$E_V^o$ with finite $\Lambda^o_V$-mass. 
The idea is to assess classical probabilistic conditional independence 
on such subspaces under normalized restrictions of $\Lambda^o_V$. 

For $A \subseteq V$ and $y \in \mathbb{R}^V$, we write 
$y_A = (y_v)_{v \in A}$ for the corresponding subvector.

\begin{Def}[\cite{engelke2025graphical}, Definition 3.1]\label{Def:Lambda cond indep}
For disjoint subsets $A, B, C \subseteq V$ forming a partition of $V$, 
we say that $\Lambda_V^o$ admits the conditional independence relation
\[
A \perp B \mid C 
\]
if for every $R \in \mathcal{R}\left(\Lambda_V^o\right)$ the classical probabilistic 
conditional independence
\[
Y_A \perp Y_B \mid Y_C
\]
holds, where $Y = (Y_v)_{v \in V}$ is a random vector taking values in 
$R$ with probability law $P_R$ defined by
\[
P_R(G) := \frac{\Lambda_V^o(G)}{\Lambda_V^o(R)}, 
\qquad G \in \cl{B}\left(\bb{R}^V\right), \ G\subset R.
\]

If $C = \emptyset$,  we understand the relation as 
unconditional independence $A\perp B$. If $A = \emptyset$ or $B = \emptyset$, 
the conditional or unconditional independence relation is understood to hold 
trivially.

If the disjoint sets $A$, $B$, and $C$ do not form a partition of $V$, 
the definition is then applied to $\Lambda_D^o$, where $D = A \cup B \cup C$, 
and $\Lambda_D^o$ denotes the marginal measure induced by $\Lambda_V^o$ on 
$E_D^o:=\mathbb{R}^D \setminus \{0_D\}$.
\end{Def}
Equivalent characterizations of this definition can be found in 
\cite{engelke2025graphical}, including one involving a reduced class 
of test sets from $\mathcal{R}\left(\Lambda_V^o\right)$ 
\cite[Theorem~4.1]{engelke2025graphical}, and another based on a 
probability kernel factorization 
\cite[Theorem~4.4]{engelke2025graphical}.  See also Section \ref{Sec:graph} below.

Furthermore, \cite{engelke2025graphical} introduced the following explosiveness condition, denoted there as Assumption~(E1):
\begin{equation}\label{eq:E1}
  \Lambda_V^o\!\left(y_d\neq 0,\, y_{V\setminus D}=0_{V\setminus D}\right)\in\{0,\infty\},
  \qquad \text{for every } D\subset V \text{ and } d\in D.
\end{equation}
Note that assumption \eqref{eq:E1} implies that $\Lambda_V^o\pp{E_V^o}$ is either $0$ or $\infty$, and only the latter case is nontrivial.
See also Lemma~\ref{Lem:E1 equiv} below for an equivalent formulation.
 With the condition \eqref{eq:E1} additionally imposed, the conditional independence relation in Definition \ref{Def:Lambda cond indep}   satisfies the so-called \emph{semigraphoid axioms}: suppose $A,B,C,D$ are disjoint subsets of $V$.
\begin{enumerate}[label=(L\arabic*),leftmargin=*,itemsep=0.3em]

\item \emph{Symmetry.} 
If $A \perp B \mid C $, then 
$B \perp A \mid C $;

\item \emph{Decomposition.} 
If $A \perp (B \cup D) \mid C $, then 
$A \perp B \mid C $ and 
$A \perp D \mid C $;

\item \emph{Weak union.} 
If $A \perp (B \cup D) \mid C $, then 
$A \perp B \mid (C \cup D) $;

\item \emph{Contraction.} 
If $A \perp B \mid C $ and 
$A \perp D \mid (B \cup C) $, then 
$A \perp (B \cup D) \mid C $.

\end{enumerate}

The conditional independence notion introduced in Definition 
\ref{Def:Lambda cond indep} has already begun to play a fundamental 
role in the statistical graphical modelling of multivariate extremes 
and multivariate L\'evy processes; see, for example, the recent survey 
article \cite{engelke2024graphical} and the references therein for 
the former, and \cite{engelke2024evy, bruck2026graph} for the latter.

The main objective of this work is to  show that the  non-standard notion of conditional 
independence  can, in fact, 
be characterized in terms of a classical probabilistic conditional 
independence relation between the coordinate projections of a 
Poisson point process on the punctured space $E_V^o$. 
This characterization yields a   transparent interpretation of 
the original concept.

 Suppose $\xi^o_V$ is a Poisson point process on $E_V^o$ with mean measure $\Lambda_V^o$ that is the same  as the one described before Definition \ref{Def:Lambda cond indep}. See Section \ref{Sec:prelim} for more details about the background of Poisson point processes.
 For     $A\subset V$, we write $\xi_{A}$ for  the marginal Poisson point process induced by $\xi_V^o$ on the non-punctured space $\bb{R}^A$, that is, $\xi_A(G)=\xi(y_A\in G)$, $G\in \cl{B}(\bb{R}^A)$.     We have the following characterization.

\begin{Thm}\label{Thm:intro}
Suppose the assumption \eqref{eq:E1} holds,
 and we have disjoint subsets $A, B, C \subseteq V$. 
Then the conditional independence relation
\[
A \perp B \mid C 
\]
in Definition \ref{Def:Lambda cond indep} holds if and only if
\[
\xi_A\perp \xi_B \mid \xi_C.
\]
 Here, when $C = \emptyset$,  the relation is understood as unconditional independence, and when $A=\emptyset$ or $B=\emptyset$, the conditional or unconditional independence relation  is understood to hold trivially.  
\end{Thm}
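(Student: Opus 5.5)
We first reduce to the case where $A,B,C$ partition $V$. If $D=A\cup B\cup C\subsetneq V$, then $\xi_D$ is a Poisson process on $\mathbb{R}^D$. Away from $0_D$ its mean measure is $\Lambda_D^o$. At $0_D$ it may carry an atom of (possibly infinite) multiplicity, and this atom carries no information relevant to $\xi_A,\xi_B,\xi_C$ beyond what is determined by the other points. Moreover, \eqref{eq:E1} passes to marginals. So we assume $D=V$ from now on. The cases $A=\emptyset$ or $B=\emptyset$ are trivial. The case $C=\emptyset$ is handled by the same argument below with the "$y_C\neq 0$" part absent.

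\textbf{Splitting by the $C$-coordinates.} Write $\xi_V^o=\xi^{(1)}+\xi^{(0)}$, where $\xi^{(1)}$ is the restriction to $\{y_C\neq 0_C\}$ and $\xi^{(0)}$ is the restriction to $\{y_C=0_C\}$. These are independent Poisson processes. The process $\xi_C$ is a function of $\xi^{(1)}$ only, apart from its atom at $0_C$. Under \eqref{eq:E1} that atom has a.s. infinite multiplicity (or zero mass), so it is deterministic.

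\begin{itemize}
\item \emph{The part $\xi^{(1)}$.} The mean measure $\Lambda_V^o$ restricted to $\{y_C\neq0\}$ disintegrates as $\Lambda_C^o(dy_C)\,K(y_C,dy_{A\cup B})$ with a probability kernel $K$. Hence, by the marking theorem, $\xi^{(1)}$ is $\xi_C|_{E_C^o}$ with i.i.d.-given-location marks $(Y_A,Y_B)\sim K(y_C,\cdot)$.
\item \emph{The part $\xi^{(0)}$.} Its $A$- and $B$-projections are independent of each other precisely when $\Lambda_V^o(y_A\neq0,\,y_B\neq0,\,y_C=0)=0$. Indeed, it then splits into two independent Poisson processes living on $\{y_B=0\}$ and $\{y_A=0\}$.
\end{itemize}

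\textbf{Forward direction.} We invoke the kernel factorization characterization \cite[Theorem~4.4]{engelke2025graphical}. Under \eqref{eq:E1}, $A\perp B\mid C$ is equivalent to two conditions: $K(y_C,\cdot)=K_A(y_C,\cdot)\otimes K_B(y_C,\cdot)$ for $\Lambda_C^o$-a.e.\ $y_C$, and the vanishing of the mixed mass on $\{y_C=0\}$. We will rederive or cite this, using test sets $R$ with $0\in R_C$ and $0\notin \overline{R_A}$, $0\notin\overline{R_B}$. Given these, conditionally on $\xi_C$ the marks attached to the points of $\xi_C$ are independent, with each mark having independent $A$ and $B$ components. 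So $\xi_A|_{y_C\neq0}$ and $\xi_B|_{y_C\neq0}$ are conditionally independent. Adding the independent pieces of $\xi^{(0)}$ preserves this. Atoms of $\Lambda_C^o$, which produce repeated points in $\xi_C$, cause no difficulty: given the multiplicities the marks are still independent with product laws. The ordering of points is irrelevant since we only look at the image measures.

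\textbf{Converse.} Assume $\xi_A\perp\xi_B\mid\xi_C$ and fix $R\in\mathcal{R}(\Lambda_V^o)$. We want to show $P_R$ makes $Y_A\perp Y_B\mid Y_C$. Condition on the event that $\xi_V^o(R)=1$, localized through the projections: use $\{\xi_C(R_C)=n\}$ together with counts of $\xi_A$ on $R_A$ and $\xi_B$ on $R_B$. Conditioned Poisson processes give i.i.d.\ $P_R$-distributed points, and the counts on the sets $R_A\times\{\cdot\}$ are measurable functionals of $\xi_A,\xi_B,\xi_C$.

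The main obstacle is exactly here. A joint count $\xi_V(R)$ is \emph{not} a function of the marginals $\xi_A,\xi_B,\xi_C$ separately, so the CI of the marginal processes has to be converted into a statement about the joint point. My plan is to use \emph{thinning}. Restrict attention to events where $\xi_C$ has a single point in a small set $S\subset R_C$ and no other points of $\xi_V$ project near it. On such events the unique point's $A$- and $B$-coordinates are readable from $\xi_A,\xi_B$ as the only points in $R_A,R_B$, with high probability after partitioning $R_C$ finely. Then compute
\[
\mathbb{P}(Y_A\in G_A,\,Y_B\in G_B\mid \cdots)
\]
and pass to the limit. This yields the factorization $K=K_A\otimes K_B$ $\Lambda_C^o$-a.e. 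The mixed-mass condition follows similarly from independence of $\xi_A$ and $\xi_B$ restricted to $\{\xi_C \text{ has no points}\}$ in a neighbourhood. We then conclude via \cite[Theorem~4.4]{engelke2025graphical}.
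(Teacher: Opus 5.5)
Your reduction to the partition case, the split of $\xi_V^o$ along $\{y_C\neq 0_C\}$ and $\{y_C=0_C\}$, and the forward direction are sound. There, kernel factorization plus vanishing mixed mass gives $\xi_A\perp\xi_B\mid\xi_C$ by marking $\xi_C|_{E_C^o}$. This follows the paper's route via Proposition~\ref{Pro:cond indep prod} and Lemma~\ref{Lem:uncond indep}, and like the paper you cite \cite[Theorem~4.4]{engelke2025graphical} for the kernel characterization. For the $\{y_C=0_C\}$ part you should add that the atoms at $0_A$ and $0_B$ it creates must be a.s.\ $0$ or $\infty$ (cases (b1)--(b3) of Lemma~\ref{Lem:uncond indep}); \eqref{eq:E1} guarantees this.

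The converse, however, has a genuine gap. You claim that on $\{\xi_C(S)=1\}$ the coordinates of the point above $S$ can be read off as the only points of $\xi_A,\xi_B$ in $R_A,R_B$, with high probability after fine partitioning. This is false. $\xi_A(R_A)$ counts every point of $\xi_V^o$ with $y_A\in R_A$, whatever its $C$-coordinate. The probability that no point with $y_C\notin S$ contributes is $\exp\{-\Lambda_V^o(y_A\in R_A,\,y_C\notin S)\}$, which can only decrease as $S$ shrinks. Worse, even on $\{\xi_A(R_A)=\xi_B(R_B)=\xi_C(S)=1\}$, nothing in $\sigma(\xi_A,\xi_B,\xi_C)$ tells you whether these three marginal points come from one point of $\xi_V^o$. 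For the same reason, no thinning can be applied coherently to the three marginals. The mixed-mass step fails similarly. By \eqref{eq:E1}, $\Lambda_C^o(E_C^o)\in\{0,\infty\}$, so in the nontrivial case $\xi_C$ a.s.\ has infinitely many points in every punctured neighbourhood of $0_C$. Conditioning on $\xi_C$ therefore cannot separate the points with $y_C=0_C$ from those with $y_C$ close to $0_C$.

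What is missing is a mechanism by which the marginals detect the joint law; the paper's mechanism is second order. Three facts combine here: the atom of $\xi_C$ at $0_C$ is deterministic, the $\{y_C\neq0_C\}$ part is the $K$-marking of $\xi_C|_{E_C^o}$, and the $\{y_C=0_C\}$ part is independent of $\xi_C$. Hence the conditional Laplace functional gives, for $G_A,G_B$ with $\Lambda_A(G_A),\Lambda_B(G_B)<\infty$,
\[
\Cov\bigl(\xi_A(G_A),\xi_B(G_B)\mid\xi_C\bigr)=\int_{E_C^o}\bigl[K(y_C,G_A\times G_B)-K_A(y_C,G_A)\,K_B(y_C,G_B)\bigr]\,\xi_C(dy_C)+\Lambda_V^o\bigl(y_A\in G_A,\,y_B\in G_B,\,y_C=0_C\bigr).
\]
This is Lemma~\ref{Lem:cond mean cov} together with the covariance step of Lemma~\ref{Lem:uncond indep}. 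Only points carrying both a $G_A$- and a $G_B$-coordinate contribute, which is exactly how joint structure becomes visible through marginals.

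Conditional independence forces this covariance to vanish a.s. The argument of Lemma~\ref{Lem:uniq} then shows that the integrand vanishes $\Lambda_C^o$-a.e.\ and the constant vanishes. That argument uses two facts: the positive and negative parts give independent integrals, hence a.s.\ constants; and $t\mapsto\int(1-e^{-tf_+})\,d\Lambda$ is strictly convex unless $f_+=0$ a.e. Next, apply Dynkin's theorem over a countable $\pi$-system of rectangles built from finite-mass sets. Such a system exists because $\Lambda_A,\Lambda_B$ are $\sigma$-finite off the single point $0$. This yields $K=K_A\otimes K_B$ $\Lambda_C^o$-a.e.\ and $\Lambda_V^o(y_A\neq0_A,\,y_B\neq0_B,\,y_C=0_C)=0$, and the cited kernel characterization closes the argument. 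Your ``single point in $S$'' idea becomes usable only after this step. Conditioning on $\xi_C(S)\in\{0,1\}$ can localize the a.s.\ identity for the Poisson integral, but not the raw point configuration.
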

The theorem follows as a special case of Theorem \ref{Thm:characterization cond indep} (see also Remark \ref{Rem:charac}) below. We note that a similar characterization, formulated in terms of the jump part 
of a multivariate L\'evy process, has been established in 
\cite{engelke2024evy}, where $\Lambda_V^o$ in Definition 
\ref{Def:Lambda cond indep} corresponds to the L\'evy measure. 
Nevertheless, the characterization in Theorem \ref{Thm:intro} should be 
distinguished from characterizations based on pure-jump L\'evy processes. 
In particular, the latter framework involves a space-time Poisson point 
process whose spatial projection is a Poisson point process with mean 
measure proportional to $\Lambda_V^o$. Conditioning in that setting is of a 
different nature, since repeated spatial locations can be distinguished 
through their temporal marks. 
Furthermore, a L\'evy measure  needs to satisfy a second-moment-type integrability condition near the origin, rendering the class of admissible measures 
more restrictive than that considered in Theorem \ref{Thm:intro}.

{Recall for  random vectors $X_A$, $X_B$ and $X_C$, the usual probabilistic conditional independence relation $X_A\perp X_B\mid X_C$ is equivalent to the functional representation (see, e.g., \cite[Proposition 8.20 and Lemma 4.22]{kallenberg:2021:foundations})
\begin{equation}\label{eq:class cond indep fun rep}
(X_A,X_B,X_C)\EqD  \pp{h_A(X_C,\theta_A), h_B(X_C,\theta_B), X_C } 
\end{equation}
for some measurable functions $h_A$ and $h_B$, where $\theta_A$ and $\theta_B$ are i.i.d.\ Uniform($0,1$) random variables independent of $X_C$, and  $\EqD $  stands for equality in distribution. The functional characterization can alternatively be stated in an asymmetric form involving only one of the uniform randomizers, say $\theta_A$ or $\theta_B$. For instance, one may write
$
(X_A,X_B,X_C)\EqD \bigl(h_A(X_C,\theta_A),\,X_B,\,X_C\bigr),
$
with the requirement that $\theta_A$ be independent of both $X_B$ and $X_C$. In what follows, however, we work with the symmetric formulation \eqref{eq:class cond indep fun rep}, as the asymmetric version is recovered as a special case.
}

{
It turns out that we can formulate a functional characterization of the conditional independence in Definition \ref{Def:Lambda cond indep} at the level of the enumerated points of the Poisson point process as well. 
\begin{Pro}\label{Pro:intro}
Under the setup of Theorem \ref{Thm:intro}, suppose in addition that    $A,B,C\subset V$ are each nonempty and form a partition of $V$, as well as $\Lambda_V^o\pp{E_V^o}=\infty$.
 The conditional independence in Theorem \ref{Thm:intro} holds if and only if for some measurable functions $h_A:\bb{R}^C\times [0,1]\mapsto \bb{R}^A $ and $h_B:\bb{R}^C\times [0,1]\mapsto \bb{R}^B $ satisfying $h_A(0_C, \cdot )\equiv 0$ and $h_B(0_C, \cdot )\equiv 0$, we have
 \[
 \xi_V^o  \EqD \sum_{i=1}^\infty \delta_{\pp{ h_A\pp{\eta_{iC},\theta_{iA}}+\eta_{iA}, \, h_B\pp{\eta_{iC},\theta_{iB}}+\eta_{iB}, \,  \eta_{i,C}}},
 \]
 where $\sum_{i=1}^\infty \delta_{\eta_i}=\sum_{i=1}^\infty \delta_{\pp{\eta_{iA},\eta_{iB},\eta_{iC}}}$ is a Poisson point process on $E_V^o$ with mean measure $\Lambda^\perp_{AB;C}$ defined by 
 \[
\Lambda^\perp_{AB;C}( G)= \Lambda_V^o\pp{(y_A,0_B,0_C)\in G   }+\Lambda_V^o\pp{ (0_A,y_B,0_C)\in G   }+\Lambda_C^o\pp{ (0_A,0_B,y_C)\in G},
 \]
where $ G\in \cl{B}\pp{E_V^o}$, with $\Lambda_C^o$ denoting the marginal measure induced by $\Lambda_V^o$ on 
$E_C^o:=\mathbb{R}^C \setminus \{0_C\}$,
 and $\pp{\theta_{iA}}$ and   $\pp{\theta_{iB}}$ are two mutually independent i.i.d.\ Uniform$(0,1)$ sequences that are independent of  $\sum_{i=1}^\infty \delta_{\eta_i}$.
\end{Pro}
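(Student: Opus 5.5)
The plan is to reduce the statement to a structural decomposition of $\Lambda_V^o$ under $A\perp B\mid C$, and then read off the point-process representation from the mapping and marking theorems for Poisson processes. Throughout, I read the first two terms of $\Lambda^\perp_{AB;C}$ as the mass of $\Lambda_V^o$ carried by the $A$-axis $\{y_B=0_B,\,y_C=0_C\}$ and by the $B$-axis $\{y_A=0_A,\,y_C=0_C\}$, respectively. This is the reading under which the two sides can match: points of $\xi_V^o$ lying on the $A$-axis must come only from the $A$-part. I split $E_V^o$ into the disjoint pieces
\[
S_A=\{y_A\neq 0,\,y_B=0,\,y_C=0\},\quad S_B=\{y_A=0,\,y_B\neq 0,\,y_C=0\},\quad S_{AB}=\{y_A\neq0,\,y_B\neq 0,\,y_C=0\},\quad S_C=\{y_C\neq 0\}.
\]
The restrictions of $\xi_V^o$ to these pieces are independent Poisson processes.

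\emph{Step 1 (the ``only if'' direction).} First I show that $A\perp B\mid C$ forces $\Lambda_V^o(S_{AB})=0$. Assume not. Take $R_A,R_B$ bounded away from the origin with $\Lambda_V^o\pp{(R_A\times R_B\times\{0_C\})}>0$, and consider the test sets $R'=R_A\times(R_B\cup\{0\})\times\{0_C\}$ and $R''=(R_A\cup\{0\})\times R_B\times\{0_C\}$. Both lie in $\mathcal{R}(\Lambda_V^o)$. Under $P_{R'}$ the coordinate $Y_C$ is constant, so conditional independence reduces to independence, and $P_{R'}$ must be a product measure. This gives
\[
\Lambda_V^o(R_A\times R_B\times 0)\,\Lambda_V^o(R_A\times 0\times 0)=\Lambda_V^o(R_A\times R_B\times 0)\,\Lambda_V^o(R_A\times 0\times 0)
\]
together with cross-ratio identities linking the $A$-axis mass to the off-axis mass. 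Next I let $R_B$ shrink towards $0$ and use \eqref{eq:E1} (equivalently Lemma~\ref{Lem:E1 equiv}), which makes the axis masses near the origin infinite. The ratios are fixed by the product structure, so they cannot stay bounded, and this contradiction gives $\Lambda_V^o(S_{AB})=0$.

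On $S_C$, I use the kernel factorization \cite[Theorem~4.4]{engelke2025graphical}: for $\Lambda_C^o$-a.e.\ $y_C\neq 0$ there are probability kernels $K_A(y_C,\cdot)$ on $\bb{R}^A$ and $K_B(y_C,\cdot)$ on $\bb{R}^B$ with
\[
\Lambda_V^o|_{S_C}(dy)=\Lambda_C^o(dy_C)\,K_A(y_C,dy_A)\,K_B(y_C,dy_B).
\]
By the transfer/randomization lemma \cite[Lemma~4.22]{kallenberg:2021:foundations}, there are measurable $h_A,h_B$ with $h_A(y_C,\theta)\sim K_A(y_C,\cdot)$ and $h_B(y_C,\theta)\sim K_B(y_C,\cdot)$ for $\theta\sim$ Uniform$(0,1)$. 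I then redefine $h_A(0_C,\cdot)=0$ and $h_B(0_C,\cdot)=0$, which is harmless because $\Lambda_C^o$ lives on $y_C\neq 0$.

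I now take a Poisson process $\sum_i\delta_{\eta_i}$ with mean measure $\Lambda^\perp_{AB;C}$ and attach independent marks $\theta_{iA},\theta_{iB}$. By the marking theorem and the mapping theorem, the image process is Poisson. Its mean measure is $\Lambda_V^o|_{S_A}$ on the $A$-part (there $\eta_C=0$, so $h_A=h_B=0$), $\Lambda_V^o|_{S_B}$ on the $B$-part, and $\Lambda_C^o\otimes K_A\otimes K_B$ on the $C$-part (there $\eta_A=0$ and $\eta_B=0$). Their sum is $\Lambda_V^o$ because $\Lambda_V^o(S_{AB})=0$. The assumption $\Lambda_V^o(E_V^o)=\infty$ together with $\sigma$-finiteness lets us enumerate the points as an infinite sequence.

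\emph{Step 2 (the ``if'' direction).} If $\xi_V^o$ has the stated representation, the mapping theorem identifies $\Lambda_V^o$ with the mean measure computed above. That measure puts no mass on $S_{AB}$ and has the product-kernel form on $S_C$. Then \cite[Theorem~4.4]{engelke2025graphical}, or a direct check that each $P_R$ factorizes conditionally on $Y_C$, gives $A\perp B\mid C$. Theorem~\ref{Thm:intro} transfers this to the process-level statement whenever that formulation is preferred.

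The main obstacle is the first half of Step 1, showing $\Lambda_V^o(S_{AB})=0$. It depends delicately on choosing test sets that contain the axis $\{0\}$ in one coordinate while staying in $\mathcal{R}(\Lambda_V^o)$, and on exploiting \eqref{eq:E1} to get a contradiction. Without \eqref{eq:E1}, a product measure on $S_{AB}$ would be consistent with the definition. If this direct route gets messy, I would instead invoke the reduced test class of \cite[Theorem~4.1]{engelke2025graphical}, which should encode exactly this vanishing.
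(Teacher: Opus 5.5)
Your route matches the paper's. The paper obtains Proposition~\ref{Pro:intro} from two ingredients. The first is the kernel factorization on $\{y_C\neq 0_C\}$ together with the vanishing \eqref{eq:A perp B C=0}, i.e.\ Theorem~\ref{Thm:characterization cond indep}(a)$\Leftrightarrow$(d), imported from \cite{engelke2025graphical}. The second is the randomization lemma plus the Poisson kernel-transform argument, i.e.\ Proposition~\ref{Pro:cond indep punc}(b)$\Leftrightarrow$(c). Your marking/mapping computation is correct in both directions. Your reading of the first two terms of $\Lambda^\perp_{AB;C}$ as restrictions of $\Lambda_V^o$ to the $A$- and $B$-axes is the intended one.

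The step that fails as written is your direct proof that $\Lambda_V^o(S_{AB})=0$.
\begin{itemize}
\item Your displayed identity has the same expression on both sides, so it carries no information.
\item $R_B\cup\{0_B\}$ is not a product set when $|B|\ge 2$, so $R'$ need not be a test set.
\item The mechanism you invoke is not available. \eqref{eq:E1} only gives $\Lambda_V^o(S_A),\Lambda_V^o(S_B)\in\{0,\infty\}$, and both can vanish while $\Lambda_V^o(S_{AB})=\infty$. For example, take the image of $t^{-2}\,dt$ on $(0,\infty)$ under $t\mapsto y$ with $y_v=t$ for $v\in A\cup B$ and $y_C=0_C$. Then, for fixed $R_A$, both $\Lambda_V^o(R')$ and $\Lambda_V^o(R'')$ are bounded by $\Lambda_V^o(y_A\in R_A)$ however $R_B$ moves, so nothing blows up.
\end{itemize}

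The quantity that must diverge is the total (axis plus off-axis) mass $\Lambda_V^o(y_a\neq 0,\,y_C=0_C)$. Suppose $m:=\Lambda_V^o(|y_a|>\epsilon,\,|y_b|>\epsilon,\,y_C=0_C)>0$ for some $a\in A$, $b\in B$, $\epsilon>0$. Then \eqref{eq:E1} with $D=A\cup B$ and $d=a$ forces $\Lambda_V^o(y_a\neq 0,\,y_C=0_C)=\infty$. For $\delta<\epsilon$, the product set $R^\delta:=\{|y_a|>\delta,\ y_C=0_C\}$ lies in $\mathcal R(\Lambda_V^o)$. Under $P_{R^\delta}$ the coordinate $Y_C$ is constant, so independence of $\{|Y_a|>\epsilon\}$ and $\{|Y_b|>\epsilon\}$ gives
\[
m\,\Lambda_V^o(R^\delta)=\Lambda_V^o\pp{|y_a|>\epsilon,\,y_C=0_C}\,\Lambda_V^o\pp{|y_a|>\delta,\,|y_b|>\epsilon,\,y_C=0_C}\le \Lambda_V^o\pp{|y_a|>\epsilon}\,\Lambda_V^o\pp{|y_b|>\epsilon}<\infty .
\]
The left side tends to $\infty$ as $\delta\downarrow 0$, which is the contradiction.

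Simpler still, and what the paper does, is your fallback. The factorization theorem you already cite, in the form of Theorem~\ref{Thm:characterization cond indep}(d), contains \eqref{eq:A perp B C=0} alongside the kernel factorization, so no separate argument is needed.
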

We provide some   explanation of the measure $\Lambda^\perp_{AB;C}$ and the functional representation here.
When viewed as an exponent measure in multivariate extreme value theory, the definition of $\Lambda^\perp_{AB;C}$ enforces extremal independence among the three components $A$, $B$, and $C$, in the sense that at most one of them can be nonzero; see \cite{engelke2025graphical} for further discussion. 
To interpret the functional representation, note that when $\eta_{iC}=0_C$, the functions $h_A$ and $h_B$ both vanish, and at most one of $\eta_{iA}$ and $\eta_{iB}$ can be nonzero. This corresponds to extremal independence between the $A$ and $B$ components when the $C$ component vanishes.   When $\eta_{iC}\neq 0_C$ instead, both $\eta_{iA}$ and $\eta_{iB}$ must vanish, yielding a representation that resembles the probabilistic counterpart \eqref{eq:class cond indep fun rep}. 
Proposition \ref{Pro:intro} follows as a special case of Proposition \ref{Pro:cond indep punc} below.
}

As a further objective of this paper, we extend the   conditional independence introduced in Definition~\ref{Def:Lambda cond indep}, together with its Poisson point process characterization, to punctured  localized product Borel spaces that generalize $\mathbb{R}^V \setminus \{0_V\}$. This extension paves the way for potential applications of the framework in more general settings. The results in the remainder of the paper are formulated within this abstract setup.

The rest of the paper is organized as follows. In Section \ref{Sec:prelim}, we review preliminary concepts related to localized Borel spaces and Poisson point processes. Section \ref{Sec:cond indep} investigates conditional independence of projections of a Poisson point process on non-punctured and punctured product spaces. In Section \ref{Sec:graph}, we discuss the generalization of the conditional independence concept from Definition \ref{Def:Lambda cond indep}, along with its characterization in terms of Poisson point processes.

\section{Preliminaries}\label{Sec:prelim}

In this section, we recall some  background  related to   Poisson point processes. The main references are \cite{LastPenrose2017} and \cite[Chapter 15]{kallenberg:2021:foundations}.  

Let $(E,\cl{E})$ be a measurable space. Recall that a measure on 
$(E,\cl{E})$ is said to be $s$-finite if it is a 
countable sum of finite measures. Note that a measure may be $s$-finite 
without being $\sigma$-finite, a distinction that is important in our context.
Let $\cl{N}(E)$ denote the space of point measures on $(E,\cl{E})$, 
where a  point measure  is an $s$-finite 
$\overline{\bb{N}}_0$-valued measure, where
$$\overline{\bb{N}}_0=\{0,1,\ldots,\infty\}=\bb{Z}_+\cup \pc{\infty}.
$$  We equip the space $\cl{N}(E)$ with the smallest $\sigma$-field that makes  the evaluation map $\mu\mapsto \mu(A)$  measurable for each fixed $A\in \cl{E}$.

A \emph{Poisson point process} (or \emph{Poisson random measure}) $\xi$ with an $s$-finite mean measure $\Lambda$ on $(E,\cl{E})$, is a random element taking value in $\cl{N}(E)$,  
satisfying:
\begin{enumerate}[label=(\roman*)]
\item For every $A \in \mathcal{E}$, the random variable $\xi(A)$ 
has a Poisson distribution with mean $\Lambda(A)$,  where if $\Lambda(A)=0$ or $\infty$, the random variable $\xi(A)$ is understood as $0$ or $\infty$  respectively.

\item[(ii)] For every $m \in \mathbb{Z}_+$ and all   disjoint sets 
$A_1,\dots,A_m \in \mathcal{E}$, the random variables 
$\xi(A_1),\dots,\xi(A_m)$ are independent.
\end{enumerate}
For any $s$-finite measure $\Lambda$ on $(E,\cl{E})$, there exists a Poisson point process $\xi\in \cl{N}(E)$ with mean measure $\Lambda$ \cite[Theorem 3.6]{LastPenrose2017}.

For a measurable function $f:E\mapsto\overline{\bb{R}}$,  it is known (e.g., \cite[Proposition 12.1]{LastPenrose2017}) that the Poisson random integral $\int  f(y) \xi(dy)$ exists a.s.\ under the condition
\begin{equation}\label{eq:pois int cond}
     \int   \pp{|f(y)|\wedge 1} \Lambda(dy)<\infty.
\end{equation}
The following uniqueness result, which will be used in a proof later, is likely  known. However, since we have been unable to locate an exact reference, we include a brief proof for completeness.
\begin{Lem}[Uniqueness]\label{Lem:uniq}
Under the setup above, for a measurable function $f:E\mapsto \bb{R}$ satisfying the integrability condition \eqref{eq:pois int cond}, we have $\int f(y) \xi(dy)=0$ a.s.\ if and only if $f(y)=0$ for $\Lambda$-a.e.\ $y\in E$.
\end{Lem}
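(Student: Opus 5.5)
The "if" direction is immediate: if $f=0$ $\Lambda$-a.e., then $\xi(\{f\neq 0\})$ is Poisson with mean $\Lambda(\{f\neq0\})=0$. Hence $\xi(\{f\ne0\})=0$ a.s., and so $\int f\,d\xi=0$ a.s. For the converse, assume $\int f\,d\xi=0$ a.s. and suppose for contradiction that $\Lambda(\{f\neq 0\})>0$. The plan is to compute the characteristic function of $\int f\,d\xi$ and show it cannot be identically one.

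First I restrict to a piece of finite mass. Condition \eqref{eq:pois int cond} gives $\Lambda(|f|>\epsilon)\le \epsilon^{-1}\int(|f|\wedge 1)\,d\Lambda<\infty$ for every $\epsilon\in(0,1]$. Since $\{f\neq0\}=\bigcup_n\{|f|>1/n\}$, we may fix $\epsilon$ with $0<\Lambda(|f|>\epsilon)<\infty$, and moreover we may arrange $\Lambda(f>\epsilon)>0$ or $\Lambda(f<-\epsilon)>0$; by symmetry take the former.

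Next I split $\int f\,d\xi=\int f 1_{\{f>\epsilon\}}\,d\xi+\int f1_{\{f\le\epsilon\}}\,d\xi=:X_1+X_2$. These are functionals of the restrictions of $\xi$ to disjoint sets, so $X_1$ and $X_2$ are independent by the independence property of Poisson processes (restrictions to disjoint sets are independent, \cite{LastPenrose2017}). Since $X_1+X_2=0$ a.s. with $X_1,X_2$ independent, both are a.s.\ constant: indeed $X_1=-X_2$ is independent of itself. So $X_1$ is a.s.\ constant. But $N:=\xi(f>\epsilon)$ is Poisson with finite positive mean, so $\mathbb{P}(N=0)=e^{-\Lambda(f>\epsilon)}\in(0,1)$. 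On $\{N=0\}$ we have $X_1=0$, while on $\{N\ge1\}$ we have $X_1>\epsilon>0$ because all summands are positive. Both events have positive probability, so $X_1$ is not a.s.\ constant, a contradiction.

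The only delicate step is justifying the independence of the two pieces and the finiteness of $X_2$. Finiteness comes from \eqref{eq:pois int cond} via \cite[Proposition 12.1]{LastPenrose2017}. Independence follows from the restriction theorem for Poisson processes with $s$-finite mean measure, or alternatively from the characteristic function formula $\mathbb{E}e^{it\int g\,d\xi}=\exp\int(e^{itg}-1)\,d\Lambda$, which factorizes over disjoint supports. This argument avoids any Fourier-uniqueness argument for Lévy measures.
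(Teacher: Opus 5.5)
Your proof is correct. It shares the paper's central step: two pieces of $\int f\,d\xi$ that are functionals of $\xi$ on disjoint sets are independent and sum to zero, so each is a.s.\ constant. You differ from the paper in how you show such a piece cannot be constant.

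The paper splits by sign, $f=f_+-f_-$, and deduces $\int f_+\,d\xi=c$ a.s. It then uses the Laplace functional to get $tc=\int(1-e^{-tf_+})\,d\Lambda$ for all $t\ge0$. If $\Lambda(f_+>0)>0$, the right-hand side is strictly concave in $t$, hence not linear, which is a contradiction. (The paper writes ``convex'', but only non-linearity matters.)

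You instead split at a level $\epsilon\in(0,1]$. You use \eqref{eq:pois int cond} a second time, via $\Lambda(|f|>\epsilon)\le\epsilon^{-1}\int(|f|\wedge1)\,d\Lambda$, so that $N=\xi(\{f>\epsilon\})$ is Poisson with finite positive mean. Then $X_1=0$ on $\{N=0\}$ and $X_1\ge\epsilon$ on $\{N\ge1\}$, and both events have positive probability.

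Your route is more elementary. It needs only one void probability and the independence of restrictions to disjoint sets, with no transform identity or concavity argument, at the cost of the truncation step. The paper's route treats $f_+$ directly, whether or not $\Lambda(f_+>0)$ is finite, but relies on the Laplace functional formula.

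Two cosmetic remarks. Your opening sentence announces a characteristic-function argument that you never use, so drop it. On $\{N\ge1\}$, write $X_1\ge\epsilon\,\xi(\{f>\epsilon\})\ge\epsilon$ rather than appealing to positive ``summands''. In the general measurable-space setting of Section~\ref{Sec:prelim}, a point measure need not be a sum of Dirac masses, and the inequality avoids that issue.
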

\begin{proof}
The ``if'' part is trivial. We prove the ``only if'' part.

Write $f=f_+-f_-$ with $f_+=f \mbf{1}_{\{f>0\}}$ and $f_-= -f \mbf{1}_{\{f\le 0\}}$. Under the assumption \eqref{eq:pois int cond},
both Poisson  integrals $\int f_+\,d\xi$ and $\int f_-\,d\xi$ are finite a.s., and
\[
\int f\,d\xi=\int f_+\,d\xi-\int f_-\,d\xi.
\]
The assumption $\int f\,d\xi=0$ a.s.\ therefore implies
\begin{equation}\label{eq:pos neg equal}
\int f_+\,d\xi=\int f_-\,d\xi \quad \text{a.s.} 
\end{equation}
Due to the disjoint supports of the integrands $f_+$ and $f_-$,  the two integrals above are independent of each other. Hence,  both are necessarily the same   constant a.s., say $c\in [0,\infty)$.    Applying the Laplace functional formula of the Poisson point process (e.g., \cite[Theorem 3.9]{LastPenrose2017}),  we have for all $t\ge 0$,
\[
 tc=-\log\mathbb E\exp\Big\{-t\int f_+(y)\xi(dy)\Big\}
=\int (1-e^{-t f_+(y)})\,\Lambda(dy).
\]
The function $t\mapsto 1-e^{-t f_+(y)}$  is strictly convex  if $f_+(y)>0$. So if $\Lambda\pp{f_+>0}>0$, then the last  integral above is a strictly convex function in $t$, contradicting the first linear expression above. We thus infer $f_+=0$ $\Lambda$-a.e.. The same argument also applies to $f_-$.

\end{proof}

In many applications, one works with a measurable space $(E,\cl{E})$ 
that is sufficiently regular to ensure the existence of a measurable 
enumeration of the point masses of a Poisson point process.

Recall that $(E,\cl{E})$ is called a \emph{Borel space} if there exists 
a bijection between $E$ and a Borel subset of $[0,1]$ such that both 
the bijection and its inverse are measurable. In particular, every 
singleton subset of $E$ belongs to $\cl{E}$. Moreover, any complete separable metric space (or more generally, a Polish space), equipped with its Borel $\sigma$-algebra, is a Borel space \cite[Theorem 1.8]{kallenberg:2021:foundations}.

In addition, we require an abstract notion of boundedness. If there 
exists an increasing sequence of sets $(L_h)_{h \in \bb{Z}_+}$ with 
$L_h \in \cl{E}$, $L_h \subset L_{h+1}$, and 
$\bigcup_{h \in \bb{Z}_+} L_h = E$, then $(E,\cl{E})$ is said to be 
\emph{localized} by $(L_h)_{h \in \bb{Z}_+}$. Under such a localization, 
a set $A \in \cl{E}$ is called \emph{bounded} if $A \subset L_h$ for 
some $h \in \bb{Z}_+$, and a measure $\mu$ on $(E,\cl{E})$ is called 
\emph{locally finite} if $\mu(A) < \infty$ for every bounded set $A$. 
In particular, every locally finite measure is $\sigma$-finite, and 
hence also $s$-finite.

For a localized Borel space $(E,\cl{E})$ with localizing sequence 
$(L_h)_{h \in \bb{Z}_+}$, define
\[
\cl{N}_L(E) 
:= \{\mu \in \cl{N}(E) : \mu(L_h) < \infty \text{ for all } h \in \bb{Z}_+ \},
\]
that is, the measurable subspace of $\cl{N}(E)$ consisting of all 
locally finite point measures. If a Poisson point process $\xi\in \cl{N}(E)$ on 
$(E,\cl{E})$ has a locally finite mean measure, then   $\xi\in \cl{N}_L(E)$ a.s..

By an \emph{enumerated representation} of $\mu \in \cl{N}_L(E)$, we 
mean the expression
\[
\mu = \sum_{i=1}^{\mu(E)} \delta_{\pi_i(\mu)},
\]
where 
$\pi_i : \cl{N}_L(E) \to E$, $i \in \bb{N}$, are   measurable maps. For the existence   
of such representations, see   
\cite[Corollary 6.5]{LastPenrose2017}.
In particular, applying the enumerated representation to a Poisson point process 
$\xi$ with a locally finite mean measure $\Lambda$ yields
\begin{equation}\label{eq:enumerated rep}
\xi = \sum_{i=1}^{\kappa} \delta_{Y_i},
\end{equation}
where $\kappa$ is a Poisson random variable with mean $\Lambda(E)$, and $(Y_i)_{i \in \bb{N}}$ is a sequence of $E$-valued random elements. In particular, the $\sigma$-field generated by $\xi$ is equal to the $\sigma$-field generated by $\pp{\kappa,\pp{Y_{i}}_{i\in \bb{Z}_+}}$. If the exact equality    in \eqref{eq:enumerated rep} is relaxed to equality 
in distribution, the existence of such an enumerated representation 
(on a possibly different probability space) requires only that 
$(E,\cl{E},\Lambda)$ is an $s$-finite measure space \cite[Theorem 3.6]{LastPenrose2017}.

A useful way to relate different Poisson point processes is through 
probability kernel transforms. Suppose $(F,\cl{F})$ is another 
measurable space, and let $\nu(\cdot \mid \cdot)$ be a probability 
kernel from $E$ to $F$, that is, $\nu(\cdot \mid y)$ is a probability 
measure on $(F,\cl{F})$ for each $y \in E$, and for every 
$B \in \cl{F}$ the mapping $ \nu(B \mid \cdot)$ is 
$\cl{E}$-measurable.
Let $\xi$ be a Poisson point process on $(E,\cl{E})$ with   enumerated representation as in \eqref{eq:enumerated rep}.
The \emph{$\nu$-transform} of $\xi$ is the point process
\[
\zeta := \sum_{i=1}^{\kappa} \delta_{Z_i},
\]
where, conditionally on $\kappa$ and $(Y_i)_{i \in \bb{N}}$, the $F$-valued 
random elements $(Z_i)_{i \in \bb{N}}$ are independent and each 
$Z_i$ has conditional distribution $\nu(\cdot \mid Y_i)$.
Then it is known  \cite[Theorem 5.6]{LastPenrose2017} that  $\zeta$ is a Poisson point process on $(F,\cl{F})$ with mean 
measure
\[
M(B) 
:= \int  \nu(B \mid y)\,\Lambda(dy),\quad B\in \cl{F},
\]
where $M$ is $s$-finite whenever $\Lambda$ is $s$-finite.

\section{Conditional independence of projections of Poisson point processes}\label{Sec:cond indep}

In this section, we examine conditional independence relations 
between coordinate projections of a Poisson point process. Since a single 
conditional independence relation involves three components, we 
restrict attention, without loss of generality, to a 
three-dimensional setting. Section~\ref{Sec:graph} discusses the 
higher-dimensional framework, where multiple conditional 
independence relations interact.

We begin by considering the non-punctured product space and then 
extend the discussion to the punctured case, building on the 
former analysis.
\subsection{Non-punctured product space}
 
Suppose $(E_j,\cl{E}_j)$, $j=1,2,3$, are measurable spaces. Let
\[
\pp{E_{123}:=E_1\times E_2\times E_3,\,  \cl{E}_{123}:=\mathcal E_1\otimes\mathcal E_2\otimes\mathcal E_3} 
\]
be the product measurable space. Let $\Lambda_{123}$ be a measure defined on $(E_{123},\cl{E}_{123})$, and $\Lambda_j$ is the  induced marginal measure in $E_j$ given by
  $$\Lambda_j(A)=\Lambda_{123}\pp{\{y_j\in A   ,\ (y_1,y_2,y_3)\in E_{123}\}},\quad A  \in \cl{E}_j ,\quad j=1,2,3,  
  .$$  

We need  the following set of regularity assumptions on these measurable spaces.
 \begin{Ass}\label{Ass:product}
 ~
\begin{enumerate}[label=(\roman*)]

\item  The measurable spaces $(E_j,\cl{E}_j)$, $j=1,2,3$, are  Borel. 

\item   The measurable spaces $(E_3,\cl{E}_3)$ and  $ (E_{123},\cl{E}_{123}) $ are localized, and the measures  $\Lambda_{123}$ and $\Lambda_3$ are    locally finite.

\item Each  $\Lambda_j$, $j=1,2$,  {is $\sigma$-finite when restricted to $E_j\setminus\{o_j\}$ for a single point $o_j\in E_j$.} 
\end{enumerate}
\end{Ass}
\begin{Rem}\label{Rem:ass 1}
We emphasize that in Assumption \ref{Ass:product} (iii), the relaxation about the single point    is essential, and will play a key role in our subsequent analysis of punctured spaces.
\end{Rem}

By disintegration  (e.g., \cite[Theorem 3.4(iii)]{kallenberg:2021:foundations}), there exists a probability   kernel $\Lambda_{12\mid 3}(\cdot\mid \cdot):\pp{\cl{E}_1\otimes \cl{E}_2} \times E_3\mapsto [0,1] $, such that
\[
\Lambda_{123}(A) = \int_{}\int \mbf{1}_A(y_1,y_2,y_3) \Lambda_3(dy_3)  \Lambda_{12\mid 3}(dy_1 dy_2|y_3), \quad A\in \cl{E}_{123}.
\]
We also introduce the marginal probability kernels 
\[
\Lambda_{1\mid 3}(\cdot \mid \cdot )=\Lambda_{12\mid 3}(\cdot \times E_2 \mid \cdot ), \quad \Lambda_{2\mid 3}=\Lambda_{12\mid 3}( E_1\times \cdot  \mid \cdot ),
\]
and define the probability kernel $\Lambda_{123\mid 3}(\cdot\mid \cdot)$: $\cl{E}_{123}\times E_3\mapsto [0,1]$, 
\[
\Lambda_{123\mid 3}(A\mid y_3)=\int  \mbf{1}_A(y_1,y_2,y_3)   \Lambda_{12\mid 3}(dy_1 dy_2|y_3), \quad  A\in \cl{E}_{123}, \ y_3\in E_3.  
\]

Now let $\xi_{123}$ be a Poisson point process on $E_{123}$ with mean measure $\Lambda_{123}$.
In the same manner, we define its projected point processes on $E_j$ by
\begin{equation}\label{eq:Lambda_j}
\xi_j(A)
:=
\xi_{123}\big(\{(y_1,y_2,y_3)\in E_{123}:\ y_j\in A\}\big),
\qquad A\in\mathcal E_j,
\quad j=1,2,3,
\end{equation}
whose corresponding mean measures are $\Lambda_j$, $j=1,2,3$.

The following conditional moment formula will be useful.
\begin{Lem}\label{Lem:cond mean cov}
Suppose Assumption \ref{Ass:product} holds,  $A_j\in \cl{E}_j$ and $\Lambda_j(A_j)<\infty$, $j=1,2$. Then  the following  almost-sure equalities hold:
\[
\E[\xi_i(A_j)\mid\xi_3]= \int  \Lambda_{j\mid 3}(A_j \mid y_3 ) \xi_3(dy_3), \quad j=1,2,
\]
and
\begin{align*}
&\Cov\pp{\xi_1(A_1), \xi_2(A_2)\mid \xi_3}\\= &\int \pb{
\Lambda_{12\mid 3}(A_1\times A_2\mid y_3)
-
\Lambda_{1\mid 3}(A_1\mid y_3)\,
\Lambda_{2\mid 3}(A_2\mid y_3)}
\xi_3(dy_3).
\end{align*}
\end{Lem}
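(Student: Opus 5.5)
The plan is to enumerate $\xi_3$ and realize $\xi_{123}$ as a kernel transform, so that conditioning on $\xi_3$ amounts to conditioning on the enumerated points $Y_{i3}$.

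\textbf{Step 1: the representation.} By Assumption \ref{Ass:product}(ii), $\Lambda_3$ is locally finite. Hence $\xi_3$ admits an enumerated representation $\xi_3=\sum_{i=1}^{\kappa}\delta_{Y_{i3}}$ as in \eqref{eq:enumerated rep}. Consider the $\nu$-transform of $\xi_3$ with kernel $\nu(\cdot\mid y_3)=\Lambda_{123\mid 3}(\cdot\mid y_3)$. Its mean measure is $\int\Lambda_{123\mid 3}(\cdot\mid y_3)\Lambda_3(dy_3)=\Lambda_{123}$. Since the lemma concerns only conditional expectations given $\xi_3$, which are functions of the joint law of $(\xi_{123},\xi_3)$, it suffices to check that $\xi_{123}$ and this transform have the same joint law with the projection $\xi_3$. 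I would then work with
\[
\xi_{123}=\sum_{i=1}^{\kappa}\delta_{(Z_{i1},Z_{i2},Y_{i3})},
\]
where, conditionally on $(\kappa,(Y_{i3}))$, the pairs $(Z_{i1},Z_{i2})$ are independent with laws $\Lambda_{12\mid 3}(\cdot\mid Y_{i3})$. Because $\sigma(\xi_3)=\sigma(\kappa,(Y_{i3})_i)$, the desired conditional moments are conditional moments given $(\kappa,(Y_{i3}))$.

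\textbf{Step 2: the computation.} Write $\xi_j(A_j)=\sum_{i\le\kappa}\mathbf 1_{A_j}(Z_{ij})$ for $j=1,2$. Its conditional expectation is $\sum_{i\le\kappa}\Lambda_{j\mid3}(A_j\mid Y_{i3})=\int\Lambda_{j\mid3}(A_j\mid y_3)\,\xi_3(dy_3)$. For the covariance, conditional independence across $i$ kills every cross term with $i\ne i'$. The diagonal terms leave $\sum_i[\Lambda_{12\mid3}(A_1\times A_2\mid Y_{i3})-\Lambda_{1\mid3}(A_1\mid Y_{i3})\Lambda_{2\mid3}(A_2\mid Y_{i3})]$, which is the claimed integral.

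\textbf{Main obstacle: infinite $\kappa$.} If $\kappa=\infty$ with positive probability, these manipulations need justification. Local finiteness of $\Lambda_3$ only makes $\kappa$ finite on each $L_h$, so I would truncate: replace $\xi_3$ by its restriction to $L_h$ and pass to the limit $h\to\infty$.
\begin{itemize}
\item For the means, monotone convergence applies to both sides.
\item For the covariance, I need square integrability. Since $\Lambda_j(A_j)<\infty$, $\xi_j(A_j)$ is Poisson and hence in $L^2$. The truncated sums converge in $L^2$, so the conditional covariances converge as well, using the $L^2$ continuity of conditional expectation.
\item The limiting integral on the right-hand side converges absolutely a.s. Its integrand is bounded by $\Lambda_{1\mid3}(A_1\mid\cdot)+\Lambda_{1\mid3}(A_1\mid\cdot)\Lambda_{2\mid3}(A_2\mid\cdot)\le 2\Lambda_{1\mid3}(A_1\mid\cdot)$, and $\int\Lambda_{1\mid3}(A_1\mid y_3)\Lambda_3(dy_3)=\Lambda_1(A_1)<\infty$, so \eqref{eq:pois int cond} holds.
\end{itemize}
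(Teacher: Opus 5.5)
Your proof is correct, but it takes a different route from the paper's. Both arguments start from the same fact: $\xi_{123}$, jointly with its $E_3$-projection, has the law of the $\Lambda_{123\mid 3}$-transform of $\xi_3$. The paper then takes the conditional Laplace functional, $\Phi(t_1,t_2)=\exp\{\int\log\psi(t_1,t_2;y_3)\,\xi_3(dy_3)\}$. It reads off the conditional mean and covariance from the first and mixed partial derivatives of $\log\Phi$ at the origin. This requires derivative bounds on $\log\psi$ (using $\psi\ge e^{-t_1-t_2}$) and dominated convergence to differentiate under the random integral.

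You instead compute the first and second conditional moments directly from the enumerated points, using conditional independence of the marks $(Z_{i1},Z_{i2})$ across $i$. The only analytic issue left is then the possibly infinite number of points. Your route is more elementary: it avoids differentiating a conditional expectation at the boundary point $t=0$ and identifying those derivatives with conditional moments. The paper's route delivers the whole conditional joint law of $(\xi_1(A_1),\xi_2(A_2))$ given $\xi_3$ at once, so higher conditional cumulants come for free.

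Your truncation step is not actually needed. All summands are nonnegative, and $\sum_i\Lambda_{j\mid 3}(A_j\mid Y_{i3})<\infty$ a.s., since its expectation is $\Lambda_j(A_j)$. Conditional monotone convergence applied to $\E[\xi_1(A_1)\xi_2(A_2)\mid\xi_3]=\sum_{i,i'}\E[\mathbf 1_{A_1}(Z_{i1})\mathbf 1_{A_2}(Z_{i'2})\mid\xi_3]$ therefore gives the covariance formula directly even when $\kappa=\infty$, after subtracting the a.s. finite product of conditional means.

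If you do truncate, keep conditioning on the full $\xi_3$ and truncate only the sums; the index set $\{i: Y_{i3}\in L_h\}$ is $\sigma(\xi_3)$-measurable. Conditioning on $\xi_3|_{L_h}$ instead also works, but you would then need to note that the restricted process is independent of $\xi_3|_{L_h^c}$.
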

\begin{proof}
 Note that the Poisson point process $\xi_{123}$, is  equal in distribution to  a $\Lambda_{123|3}$-transform of $\xi_3$ (see Section \ref{Sec:prelim}).
Applying the conditional Laplace functional formula  (e.g.\ \cite[Lemma~15.2(iii)]{kallenberg:2021:foundations}) with function
$f(y_1,y_2,y_3):=t_1\mathbf 1_{A_1}(y_1)+t_2\mathbf 1_{A_2}(y_2)$, we have the conditional joint Laplace transform:
\begin{equation}\label{eq:Phi-def}
\Phi(t_1,t_2):=\E\big[e^{-t_1\xi_1(A_1)-t_2 \xi_2(A_2)}\mid \xi_3\big]
=
\exp\left\{\int  \log \psi(t_1,t_2;y_3) \xi_3(dy_3)\right\},
\end{equation}
 where $t_1,t_2\in [0,\infty)$ and
\begin{align*}
&\psi(t_1,t_2;y_3) :=   \int e^{-t_1\mathbf 1_{A_1}(y_1)-t_2\mathbf 1_{A_2}(y_2)}\,
\Lambda_{123\mid 3}(dy_1dy_2dy_3\mid y_3)\\=&\int \bigl[1+(e^{-t_1}-1)\mathbf{1}_{A_1}(y_1)\bigr]
\bigl[1+(e^{-t_2}-1)\mathbf{1}_{A_2}(y_2)\bigr] \Lambda_{123\mid 3}(dy_1dy_2dy_3\mid y_3) 
\\= & 1
+(e^{-t_1}-1)\Lambda_{1\mid3}(A_1\mid y_3)
+(e^{-t_2}-1)\Lambda_{2\mid3}(A_2\mid y_3)\\
&+(e^{-t_1}-1)(e^{-t_2}-1)\Lambda_{12\mid3}(A_1\times A_2\mid y_3).
\end{align*}
The first-order partial derivatives   $-\partial_{t_1}\log \Phi(0,0)$ and $-\partial_{t_2}\log \Phi(0,0)$  yield the formulas for the
conditional means. The mixed derivative   $\partial_{t_1}\partial_{t_2}\log\Phi(0,0)$  yields the formula for the
conditional covariance. The differentiations under the random integral $\int \cdot \ \xi_3(dy_3)$ in \eqref{eq:Phi-def} can be justified by the dominated convergence theorem with the mean value theorem through conditioning on $\xi_3$. In particular,  using the fact that $\psi(t_1,t_2;y_3)\ge e^{-t_1-t_2}$  as well as  some elementary calculations, one can show that for $(t_1,t_2)$ in a fixed neighborhood of $(0,0)$, the following  bounds  hold for all $y_3\in E_3$:
\[
\bigl|\partial_{t_1}\log\psi(t_1,t_2;y_3)\bigr|
\le C\,\Lambda_{1\mid3}(A_1\mid y_3),\qquad
\bigl|\partial_{t_2}\log\psi(t_1,t_2;y_3)\bigr|
\le C\,\Lambda_{2\mid3}(A_2\mid y_3),
\]
and
\begin{equation}\label{eq:mix partial bound}
\bigl|\partial_{t_1t_2}\log\psi(t_1,t_2;y_3)\bigr|
\le C\Big(\Lambda_{12\mid3}(A_1\times A_2\mid y_3)
+\Lambda_{1\mid3}(A_1\mid y_3)\Lambda_{2\mid3}(A_2\mid y_3)\Big),
\end{equation}
where  $C\in (0,\infty)$ is a constant that does not depend on $y_1$ or $y_2$. These bounds above 
are all integrable with respect to $\xi_3$ a.s.; for example, for the last term in \eqref{eq:mix partial bound},  
\[
 \int
\Lambda_{1\mid3}(A_1\mid y_3)\Lambda_{2\mid3}(A_2\mid y_3) \Lambda_3(dy_3) \le  \int
\Lambda_{1\mid3}(A_1\mid y_3)  \Lambda_3(dy_3)=\Lambda_1(A_1)<\infty,
\]
and  thus verifies the integrability condition   with respect to $\xi_3$ (see \eqref{eq:pois int cond}).

\end{proof}

We are now ready to state the main result of this section: a characterization of conditional independence of marginal Poisson point processes. 

\begin{Pro}\label{Pro:cond indep prod}
Suppose Assumption \ref{Ass:product} holds,  and $\xi_{123}$ is a Poisson point process  with  mean measure $\Lambda_{123}$ on $(E_{123},\cl{E}_{123})$ and  projected Poisson point processes $\xi_{j}$, $j=1,2,3$, as in \eqref{eq:Lambda_j}.    The following statements are equivalent.
 \begin{enumerate}[label=(\alph*)]
     \item  The conditional independence relation  $\xi_1\perp \xi_2 \mid \xi_3$ holds.
     \item  For $\Lambda_3$-a.e.\ $y_3\in E_3$, the product measure factorization holds:
     \[
     \Lambda_{12\mid 3}(\cdot \mid y_3)= \Lambda_{1\mid 3}(\cdot\mid y_3)\otimes \Lambda_{2\mid 3}(\cdot\mid y_3).
     \]
     \item  Let $\sum_{i=1}^\kappa \delta_{\eta_i}$ be an enumerated representation of $\xi_3$. Then there exist    measurable functions $h_1:E_3\times [0,1]\mapsto E_1$   and $h_2: E_3\times [0,1]\mapsto E_2$, so that 
     \begin{equation}\label{eq:func rep cond indep}
     \xi_{123} \EqD 
    \sum_{i=1}^\kappa \delta_{\pp{h_1(\eta_i,\theta_{i1}),h_2(\eta_i,\theta_{i2}),\eta_i} },   
     \end{equation}
 where   $\pp{\theta_{i1}}_{i\in \bb{Z}_+}$ and $\pp{\theta_{i2}}_{i\in \bb{Z}_+}$  are two i.i.d.\ $\mathrm{Uniform}(0,1)$ sequences which are mutually independent and also independent of $\sum_{i=1}^\kappa \delta_{\eta_i}$. 
 \end{enumerate}
\end{Pro}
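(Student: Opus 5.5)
I would prove the cycle (b)$\Rightarrow$(c)$\Rightarrow$(a)$\Rightarrow$(b). The basic structural fact is the one already used in the proof of Lemma \ref{Lem:cond mean cov}: $\xi_{123}$ is equal in distribution to the $\Lambda_{123\mid 3}$-transform of $\xi_3$. Concretely, given the enumerated representation $\xi_3=\sum_{i=1}^\kappa\delta_{\eta_i}$, conditionally on $\xi_3$ the points of $\xi_{123}$ are $(Z_{i1},Z_{i2},\eta_i)$. Here the pairs $(Z_{i1},Z_{i2})$ are conditionally independent, with laws $\Lambda_{12\mid 3}(\cdot\mid\eta_i)$.

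\textbf{(b)$\Rightarrow$(c).} Since $E_1,E_2$ are Borel, the standard randomization lemma (e.g.\ \cite[Lemma 4.22]{kallenberg:2021:foundations}) yields measurable $h_j:E_3\times[0,1]\to E_j$ such that $h_j(y_3,\theta)$ has law $\Lambda_{j\mid 3}(\cdot\mid y_3)$ when $\theta\sim$ Uniform$(0,1)$. Under (b), $(h_1(y_3,\theta_1),h_2(y_3,\theta_2))$ with independent $\theta_1,\theta_2$ then has law $\Lambda_{12\mid 3}(\cdot\mid y_3)$ for $\Lambda_3$-a.e.\ $y_3$. On the exceptional $\Lambda_3$-null set $N$, a.s.\ no $\eta_i$ falls, since $\xi_3(N)=0$ a.s. Hence the right-hand side of \eqref{eq:func rep cond indep} is a $\Lambda_{123\mid3}$-transform of $\xi_3$, and by \cite[Theorem 5.6]{LastPenrose2017} it is Poisson with mean measure $\int\Lambda_{123\mid 3}(\cdot\mid y_3)\Lambda_3(dy_3)=\Lambda_{123}$. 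This gives (c).

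\textbf{(c)$\Rightarrow$(a).} Under the representation, $\xi_1=\sum_i\delta_{h_1(\eta_i,\theta_{i1})}$ and $\xi_2=\sum_i\delta_{h_2(\eta_i,\theta_{i2})}$, while $\xi_3=\sum_i\delta_{\eta_i}$. Thus $\xi_1$ is a measurable function of $(\xi_3$ enumeration$,(\theta_{i1}))$ and $\xi_2$ is a function of $(\xi_3$ enumeration$,(\theta_{i2}))$. The $\sigma$-field of $\xi_3$ coincides with that of $(\kappa,(\eta_i))$, and the two randomizing sequences are independent of each other and of $\xi_3$. Together these give $\xi_1\perp\xi_2\mid\xi_3$, and this relation transfers through equality in distribution of the triple $(\xi_1,\xi_2,\xi_3)$.

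\textbf{(a)$\Rightarrow$(b).} This is the main step, and the main obstacle. Take $A_j\in\cl{E}_j$ with $\Lambda_j(A_j)<\infty$. Conditional independence forces the conditional covariance in Lemma \ref{Lem:cond mean cov} to vanish a.s. Lemma \ref{Lem:uniq} applied to $f(y_3)=\Lambda_{12\mid3}(A_1\times A_2\mid y_3)-\Lambda_{1\mid3}(A_1\mid y_3)\Lambda_{2\mid3}(A_2\mid y_3)$ then gives $f=0$ $\Lambda_3$-a.e. The integrability condition \eqref{eq:pois int cond} holds by the bound shown in that lemma.

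The difficulty is to pass from these finite-mass rectangles to all of $\cl{E}_1\otimes\cl{E}_2$ with a single null set. Here Assumption \ref{Ass:product}(iii) enters. Since $E_j$ is Borel, $\cl{E}_j$ is countably generated. I would choose a countable $\pi$-system $\mathcal{P}_j$ generating $\cl{E}_j\cap(E_j\setminus\{o_j\})$, consisting of sets of finite $\Lambda_j$-mass; this uses $\sigma$-finiteness off $o_j$, intersecting a countable generator with an exhausting sequence of finite-mass sets. The countably many identities then hold off a common null set, so the two probability measures $\Lambda_{12\mid3}(\cdot\mid y_3)$ and $\Lambda_{1\mid3}\otimes\Lambda_{2\mid3}(\cdot\mid y_3)$ agree on rectangles $P_1\times P_2$. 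To handle the puncture point, note the marginals already agree, being the marginals of both measures. Equality on sets of the form $(\{o_1\})\times P_2$ follows by subtracting, since $\Lambda_{12\mid3}(E_1\times P_2\mid y_3)=\Lambda_{2\mid3}(P_2\mid y_3)$ and countable additivity over the exhausting sets recovers $\Lambda_{12\mid3}((E_1\setminus\{o_1\})\times P_2\mid y_3)$. Similarly one gets $P_1\times\{o_2\}$, and $\{o_1\}\times\{o_2\}$ follows by total mass 1. These rectangles form a $\pi$-system generating $\cl{E}_1\otimes\cl{E}_2$, and Dynkin's $\pi$-$\lambda$ theorem concludes (b).
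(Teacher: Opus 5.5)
Your proposal is correct and follows the paper's proof essentially step for step. It uses the same cycle of implications. For (a)$\Rightarrow$(b) it combines the conditional covariance formula of Lemma \ref{Lem:cond mean cov} with the uniqueness Lemma \ref{Lem:uniq} and a countable $\pi$-system of finite-mass sets. For (b)$\Rightarrow$(c) it uses the randomization lemma plus the $\Lambda_{123\mid 3}$-transform, and for (c)$\Rightarrow$(a) the representations $\xi_j=H_j(\xi_3,\Theta_j)$. Your extra step, recovering the identities on $\{o_1\}\times P_2$, $P_1\times\{o_2\}$ and $\{(o_1,o_2)\}$ from the marginals, is a genuine refinement: the paper's finite-mass rectangles all lie in $E_1^o\times E_2^o$, so by themselves they do not generate $\mathcal{E}_1\otimes\mathcal{E}_2$ as the paper asserts.
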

\begin{proof}
    We argue  the  cycle (a)$\Rightarrow$(b)$\Rightarrow$(c)$\Rightarrow$(a).

\noindent  $\bullet$ (a)$\Rightarrow$(b).
We claim that under Assumption~\ref{Ass:product} (i) and (iii), for each $j=1,2$, there exists a countable collection $\mathcal C_j\subset \cl{E}_j$ consisting of finite $\Lambda_j$-measure subsets, such that $\sigma(\cl{C}_j)=\mathcal E_j$. Indeed, since $\pp{E_j,\cl{E}_j}$ is Borel, the $\sigma$-field $\cl{E}_j$ admits a countable generator, say $\cl{C}_j^*$. Since $\Lambda_j$ when restricted to $E_j^o:=E_j\setminus\{o_j\}$ is $\sigma$-finite,   there exists $B_{h,j}\in \cl{E}_j$, $h\in \bb{Z}_+$, such that $\cup_{h}B_{h,j}=E_j^o$ and $\Lambda_j\pp{B_{h,j}}<\infty$. Then one can define $\mathcal C_j=\pc{ G_j\cap B_{h,j} :\, G_j\in \cl{C}_j^*,\, h\in \bb{Z}_+ }$. Note that although no element in $\cl{C}_j$ contains $o_j$, both the singleton $\{o_j\}$ and its complement $E_j^o$ belong to $\cl{E}_j$ due to the space being Borel, and hence  $\mathcal C_j$ generates $\cl{E}_j$ nevertheless.

We may further assume without loss of generality that each $\cl{C}_j$ is closed under finite intersections, that is, $\cl{C}_j$ is a $\pi$-system, $j=1,2$.
Now take $A_j\in \cl{C}_j$, $j=1,2$.  By the conditional covariance formula in Lemma \ref{Lem:cond mean cov}, we see that
\[
 \int
\Lambda_{12\mid 3}(A_1\times A_2\mid y_3)
-
\Lambda_{1\mid 3}(A_1\mid y_3)\,
\Lambda_{2\mid 3}(A_2\mid y_3)
\,\xi_3(dy_3)=0 \text{ a.s..}
\]
Applying the uniqueness Lemma \ref{Lem:uniq}, we conclude that 
\[
\Lambda_{12\mid 3}(A_1\times A_2\mid y_3)
=\Lambda_{1\mid 3}(A_1\mid y_3)\,
\Lambda_{2\mid 3}(A_2\mid y_3)
\]
for $\Lambda_3$-a.e.\ $y_3\in E_3$.   Combining this with the fact that the countable collection $\{A_1\times A_2:\, A_1\in \cl{C}_1,\ A_2\in \cl{C}_2\}$   forms a $\pi$-system  generating the $\sigma$-field $\cl{E}_1\otimes \cl{E}_{2}$,  the conclusion follows from Dynkin's theorem.
\medskip

\noindent$\bullet$ (b)$\Rightarrow$(c).

By \cite[Lemma 4.22]{kallenberg:2021:foundations},   there exist measurable functions $h_1:E_3\times [0,1]\mapsto E_1$   and $h_2: E_3\times [0,1]\mapsto E_2$, such that for all $y_3\in E_3$ and $i\in \bb{Z}_+$ that
\[
\Prt{h_1(y_3,\theta_{i1})\in\cdot} = \Lambda_{1\mid 3}(\cdot\mid y_3), \quad  \Prt{h_2(y_3,\theta_{i2})\in\cdot} = \Lambda_{2\mid 3}(\cdot\mid y_3)
\]
Now, under the product measure factorization assumption of (b),  we have
\[
\Prt{\pp{h_1(y_3,\theta_{i1}),h_2(y_3,\theta_{i2})}\in \cdot }=\Lambda_{12\mid 3}(\cdot\mid y_3),\quad   \text{for $\Lambda_3$-a.e. }y_3\in E_3.
\]
The conclusion now follows from the fact that a $\Lambda_{123\mid 3}$-transform of $\xi_3$ is equal in distribution to  $\xi_{123}$.
\medskip

\noindent$\bullet$  (c)$\Rightarrow$(a)

From the assumptions in (c), one may assume that
\[
\xi_1= \sum_{i=1}^\kappa \delta_{h_1(\eta_i,\theta_{i1})}, \quad  \xi_2= \sum_{i=1}^\kappa \delta_{h_2(\eta_i,\theta_{i2})},\quad  \xi_3=  \sum_{i=1}^\kappa \delta_{\eta_i}. 
\]
The  conditional independence claim in (a) follows  if  
 one identifies the following almost sure representations: 
\begin{equation}\label{eq:functional rep}
\xi_1=H_1(\xi_3,\Theta_1),\quad \xi_2=H_2(\xi_3,\Theta_2)
\end{equation}
where the two random elements $\Theta_1:=\pp{\theta_{i1}}_{i\in \bb{Z}_+}$, $\Theta_2:=\pp{\theta_{i2}}_{i\in \bb{Z}_+}$ are mutually independent and also independent of $\xi_3$, with each $H_j$  a measurable map $\pp{\cl{N}_L(E_3), [0,1]^{\bb{Z}_+}}\mapsto \cl{N}(E_j)$, $j=1,2$. Indeed, these maps can be constructed by suitable  compositions of measurable maps, by noticing the measurability of the enumeration map $\cl{N}_{L}(E_3)\mapsto   \overline{\bb{N}}_0 \times  E_3^{\bb{Z}_+}$, $ \xi_3\mapsto \pp{\kappa, \pp{\eta_i}_{i\in\bb{Z}_+} }$, as well as the measurability of the assembling map  $\overline{\bb{N}}_0 \times  E_3^{\bb{Z}_+}\mapsto \cl{N}(E_j) $, $\pp{\kappa, \pp{Y_{ij}}_{i\in \bb{Z}_+}}\mapsto \sum_{i=1}^{\kappa} \delta_{ Y_{ij}}$, with $Y_{ij}=h(\eta_i,\theta_{ij})$, $j=1,2$.

\end{proof}

\begin{Rem}
We also note that, possibly after extending the underlying probability space, 
one can construct on the same probability space as $\xi$ a point process given 
by the right-hand side of \eqref{eq:func rep cond indep} that coincides with 
$\xi$ a.s.; see \cite[Lemma 4.21 and Corollary 8.18]{kallenberg:2021:foundations}.
\end{Rem}


\subsection{Punctured product spaces}

Let $(E_j,\mathcal E_j)$, $j=1,2,3$, be measurable spaces, each equipped with a distinguished point $o_j\in E_j$. 
For each $j$, define the punctured space $E_j^{o}:=E_j\setminus\{o_j\}$ and the associated $\sigma$-field $\mathcal E_j^{o}:=\{A\cap E_j^{o}: A\in\mathcal E_j\}$.  
As before, write the trivariate product space as $(E_{123},\mathcal E_{123})$, and denote by $(E_{123}^{o},\mathcal E_{123}^{o})$ the punctured product space, where
\[
E_{123}^{o}:=E_{123}\setminus\{(o_1,o_2,o_3)\},
\qquad 
\mathcal E_{123}^{o}:=\{A\cap E_{123}^{o}: A\in\mathcal E_{123}\}.
\]
Similarly, for $1\le j<k\le 3$, we denote the bivariate product spaces and their punctured versions by $(E_{jk},\mathcal E_{jk})$ and $(E_{jk}^{o},\mathcal E_{jk}^{o})$, respectively. Note that $E_{123}^o$ and $E_{jk}^o$ should be distinguished from $E_1^o\times E_2^o\times E_3^o$ and $E_{j}^o\times E_k^o$, respectively.

Next, suppose $\Lambda^o_{123}$ is a measure on the punctured space $(E_{123}^o,\cl{E}_{123}^o)$.   Denote the induced marginal measures on the non-punctured spaces $(E_j,\cl{E}_j)$ by
\[
\Lambda_j(A)
:=
\Lambda^o_{123}\big(\{(y_1,y_2,y_3)\in E_{123}^o:\; y_j\in A\}\big),
\quad A\in\mathcal E_j,
\quad j=1,2,3.
\]
and those  on the non-punctured spaces $(E_{jk},\cl{E}_{jk})$ by
 \[
\Lambda_{jk}(A)
:=
\Lambda^o_{123}\big(\{(y_1,y_2,y_3)\in E_{123}^o:\; (y_j,y_k)\in A\}\big),
\quad A\in\mathcal E_{jk},
\quad 1\le j<k\le 3.
\]
 We also denote the restrictions of these marginal measures to the corresponding punctured spaces by 
\[
\Lambda_j^o:=\Lambda_j\big|_{\mathcal E_j^o},
\qquad j=1,2,3,
\]
and
\[
\Lambda_{jk}^o:=\Lambda_{jk}\big|_{\mathcal E_{jk}^o},
\qquad 1\le j<k\le 3.
\]

We will be working with the following set of assumptions.
\begin{Ass}\label{Ass:puncture}~
\begin{enumerate}[label=(\roman*)]

\item  The measurable spaces $(E_j,\cl{E}_j)$, $j=1,2,3$, are  Borel. 

    \item Each marginal punctured space $E_j^o$ is localized by an increasing  sequence
$L_{h,j}\in\cl{E}_j^o$,  $h\in\bb{Z}_+$.  The  punctured product space $E_{123}^o$ is   localized by the sequence
\[
\Big(L_{h,1}\times E_2\times E_3\Big)
\;\cup\;
\Big(E_1\times L_{h,2}\times E_3\Big)
\;\cup\;
\Big(E_1\times E_2\times L_{h,3}\Big),
\qquad h\in\mathbb Z_+,
\]
and analogously for the punctured bivariate spaces $E_{jk}^o$, $1\le j<k\le3$;  
for example, $E_{12}^o$ is localized by
\[
\Big(L_{h,1}\times E_2\Big)\;\cup\;\Big(E_1\times L_{h,2}\Big),
\qquad h\in\mathbb Z_+.
\]

\item The measure  $\Lambda^o_{123}$ on $(E_{123}^o,\cl{E}_{123}^o)$ is locally finite.
\end{enumerate}
\end{Ass}

 \begin{Rem}\label{Rem:loc fin vs non}
  Under Assumption \ref{Ass:puncture},    since it is possible that
$
\Lambda_j(\{o_j\})=\infty$, $j=1,2,3$,
each of the marginal measures $\Lambda_j$   needs not be  $\sigma$-finite.
Nevertheless, in view of Assumption \ref{Ass:puncture} (ii) and (iii), the restrictions
$
\Lambda_j^o 
$
are locally finite measures on the localized spaces $(E_j^o,\mathcal E_j^o)$, and thus also $\sigma$-finite. Similar comments apply to the bivariate marginal measures $\Lambda_{jk}$ and $\Lambda_{jk}^o$, $1\le j<k\le 3$.
 \end{Rem}

 We first prepare a key result before stating the main result of this section.

\begin{Lem}\label{Lem:uncond indep}
Assume Assumption~\ref{Ass:puncture}(i)–(ii) holds for $(E_1,\mathcal E_1)$, $(E_2,\mathcal E_2)$ with distinguished points $o_1\in E_1$, $o_2\in E_2$, and the punctured space $(E_{12}^o,\mathcal E_{12}^o)$.
Let $\xi_{12}^o$ be a Poisson point process on $E_{12}^o$ with locally finite mean measure $\Lambda_{12}^o$, and define the projections
\[
\xi_j(A):=\xi_{12}^o\big(\{(y_1,y_2)\in E_{12}^o:\ y_j\in A\}\big),\qquad A\in\mathcal E_j,\ j=1,2.
\]
Set $N:=\xi_{12}^o(E_{12}^o)$.  Then the conditional independence relation $\xi_1\perp\xi_2\mid N$ holds if and only if one of the following cases applies.
\begin{enumerate}[label=(\alph*)]
\item   $\Lambda_{12}^o(E_{12}^o)=0$.

\item   $\Lambda_{12}^o(y_1\neq o_1,\, y_2\neq o_2)=0$   and either of the following holds:
\begin{enumerate}[label=(b\arabic*)]
    \item  $\Lambda_{12}^o(y_1=o_1)=\Lambda_{12}^o(y_2=o_2)=\infty$;
    \item  $\Lambda_{12}^o(y_1=o_1)=\infty$, $\Lambda_{12}^o(y_2=o_2)=0$;
\item $\Lambda_{12}^o(y_1=o_1)=0$, $\Lambda_{12}^o(y_2=o_2)=\infty$.
\end{enumerate}

\item   $0<\Lambda_{12}^o(E_{12}^o)<\infty$,  $\Lambda_{12}^o(y_1=o_1)=0$ and
$\Lambda_{12}^o$ factorizes as $\Lambda_{12}^o(E_1^o\times E_2)^{-1} \Lambda_1^o\otimes\Lambda_2$ on $\mathcal E_1^o\otimes\mathcal E_2$

\item $0<\Lambda_{12}^o(E_{12}^o)<\infty$, $\Lambda_{12}^o( y_2=o_2)=0$ and
$\Lambda_{12}^o$ factorizes as $\Lambda_{12}^o( E_1\times E_2^o)^{-1} \Lambda_1\otimes\Lambda_2^o$ on $\mathcal E_1\otimes\mathcal E_2^o$.
\end{enumerate}
 
\end{Lem}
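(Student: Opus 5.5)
The plan is to split according to whether the total mass $\Lambda_{12}^o(E_{12}^o)$ is $0$, infinite, or finite and positive. First, decompose $E_{12}^o$ into the three disjoint measurable pieces
\[
S_1:=\{y_1\neq o_1,\,y_2=o_2\},\quad S_2:=\{y_1=o_1,\,y_2\neq o_2\},\quad S_{12}:=\{y_1\neq o_1,\,y_2\neq o_2\}.
\]
The restrictions of $\xi_{12}^o$ to these pieces are independent Poisson processes. Two identities will be used throughout: $\xi_1(\{o_1\})=\xi_{12}^o(S_2)$ and $\xi_2(\{o_2\})=\xi_{12}^o(S_1)$. Moreover, $\xi_1|_{E_1^o}$ is a functional of the points in $S_1\cup S_{12}$, and $\xi_2|_{E_2^o}$ is a functional of the points in $S_2\cup S_{12}$.

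If $\Lambda_{12}^o(E_{12}^o)=0$, everything is trivial, which is case (a). If $\Lambda_{12}^o(E_{12}^o)=\infty$, then $N=\infty$ a.s. (using $s$-finiteness and property (i) of Poisson processes), so conditioning on $N$ is vacuous and the relation reduces to unconditional independence $\xi_1\perp\xi_2$.

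In the infinite-mass case I first show $\Lambda_{12}^o(S_{12})=0$. Suppose not. Using the localization in Assumption \ref{Ass:puncture}(ii), choose $h$ with $\Lambda_{12}^o\big(S_{12}\cap(L_{h,1}\times L_{h,2})\big)>0$. Set $A_1=L_{h,1}\subset E_1^o$ and $A_2=L_{h,2}\subset E_2^o$. Both $\Lambda_1(A_1)=\Lambda_{12}^o(L_{h,1}\times E_2)$ and $\Lambda_2(A_2)$ are finite by local finiteness. Splitting into independent pieces gives
\[
\Cov(\xi_1(A_1),\xi_2(A_2))=\Lambda_{12}^o(A_1\times A_2)>0,
\]
which contradicts independence. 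With $S_{12}$ null, $\xi_1$ equals the projection of $\xi_{12}^o|_{S_1}$ plus $\xi_{12}^o(S_2)\delta_{o_1}$, and symmetrically for $\xi_2$. Hence $\xi_1(\{o_1\})=\xi_2(E_2^o)$ a.s. Independence then forces this common variable to be a.s. constant, so $\Lambda_{12}^o(S_2)\in\{0,\infty\}$, and similarly $\Lambda_{12}^o(S_1)\in\{0,\infty\}$. Since $\Lambda_{12}^o(y_1=o_1)=\Lambda_{12}^o(S_2)$ here and the total mass is infinite, this yields exactly (b1)–(b3). Conversely, under (b) each $\xi_j$ is a function of a single one of the independent processes $\xi_{12}^o|_{S_1}$, $\xi_{12}^o|_{S_2}$; the added mass at $o_j$ is the a.s. constant $0$ or $\infty$. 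So independence holds.

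In the case $0<\Lambda_{12}^o(E_{12}^o)<\infty$, let $P:=\Lambda_{12}^o/\Lambda_{12}^o(E_{12}^o)$. Conditionally on $N=n$, the process is $\sum_{i\le n}\delta_{(Y_{i1},Y_{i2})}$ with $(Y_{i1},Y_{i2})$ i.i.d. $\sim P$.
\begin{itemize}
\item Necessity: condition on $N=1$, which has positive probability. Then $\xi_j=\delta_{Y_{1j}}$, and $Y_{1j}$ is recovered measurably from $\delta_{Y_{1j}}$ because singletons are measurable in a Borel space. So $Y_{11}\perp Y_{12}$, i.e. $P=P_1\otimes P_2$.
\item Since $P(\{(o_1,o_2)\})=0$, we get $P_1(\{o_1\})P_2(\{o_2\})=0$. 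If $P_1(\{o_1\})=0$, rewrite $P=P_1\otimes P_2$ in terms of $\Lambda_1^o,\Lambda_2$ with normalizing constant $\Lambda_{12}^o(E_1^o\times E_2)$; this gives (c). Symmetrically, $P_2(\{o_2\})=0$ gives (d).
\item Sufficiency: under (c) or (d), $P$ is a product measure. Then for each $n$ the families $(Y_{i1})_{i\le n}$ and $(Y_{i2})_{i\le n}$ are independent, hence so are their empirical measures given $N=n$.
\end{itemize}

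The main obstacle I expect is the infinite-mass step showing $\Lambda_{12}^o(S_{12})=0$. One must choose test sets $A_1,A_2$ avoiding $o_1,o_2$ with finite marginal masses despite $\Lambda_j(\{o_j\})$ possibly being infinite. The localization of $E_{12}^o$ in Assumption \ref{Ass:puncture}(ii) is exactly what makes $\Lambda_1(L_{h,1})<\infty$ available. The covariance computation is then the unconditional analogue of Lemma \ref{Lem:cond mean cov}.
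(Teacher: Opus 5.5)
Your proposal is correct and follows essentially the same route as the paper: the same three-way split on the total mass, the covariance identity $\Cov(\xi_1(A_1),\xi_2(A_2))=\Lambda_{12}^o(A_1\times A_2)$ to eliminate the mass on $\{y_1\neq o_1,\,y_2\neq o_2\}$ in the infinite case, and the binomial-process reduction combined with $P(\{(o_1,o_2)\})=0$ in the finite case. Your small variations are, if anything, cleaner than the paper's. Using the rectangles $L_{h,1}\times L_{h,2}$ replaces the $\sigma$-finite Dynkin argument. The exact identity $\xi_1(\{o_1\})=\xi_2(E_2^o)$, together with the fact that a variable independent of itself is a.s.\ constant, makes rigorous the paper's ``upon inspection'' step that rules out finite positive masses on the axes.
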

\begin{Rem}
    Under the condition $\Lambda_{12}^o(y_1\neq o_1,y_2\neq o_2)=0$, note that 
    \[
    \Lambda_{12}^o(y_1=o_1)=    \Lambda_{12}^o(y_2\neq o_2),\qquad     \Lambda_{12}^o(y_2=o_2)=    \Lambda_{12}^o(y_1\neq o_1).
    \]
    So the subcases in (b) can be equivalently formulated using the measure values $ \Lambda_{12}^o(y_j\neq o_j)$, $j=1,2$.
\end{Rem}

\begin{proof}[Proof of Lemma \ref{Lem:uncond indep}]
~\\

 \noindent $\bullet$ The case  $\Lambda_{12}^o(E_{12}^o)=0$ is trivial since $\xi_1$, $\xi_2$ are both zero measures.

\noindent $\bullet$ Now consider the case  $\Lambda_{12}^o(E_{12}^o)=\infty$.

 In this case, we have $N=\infty$ a.s., and thus the conditional independence is equivalent to the unconditional independence $\xi_1\perp \xi_2$, which implies that
 \[
 \Cov\pp{\xi_1(A_1),\xi_2(A_2)}= \Lambda_{12}^o\pp{A_1\times A_2}=0 \text{ for any }A_j\in \cl{E}_j^o \text{ with }\Lambda_j^o(A_j)<\infty, \ j=1,2.
 \]
 Therefore, applying a $\sigma$-finite version of Dynkin's theorem (e.g., \cite[Theorem A.5]{LastPenrose2017}),   we have 
 \begin{equation}\label{eq:int zero}
 \Lambda_{12}^o(E_1^o\times E_2^o)=\Lambda_{12}^o(y_1\neq o_1,\, y_2\neq o_2)=0.
 \end{equation}

Assume \eqref{eq:int zero} from now on. In this case, the two disjoint subsets $\{y_1=o_1\}$ and $\{y_2=o_2\}$    in $E_{12}^o$ together form the support of the measure $\Lambda_{12}^o$.   Thus we have $\xi_1( E_1^o \cap  \cdot )  \perp  \xi_2( E_2^o \cap  \cdot )$. Combining this with the independent decomposition $\xi_j(\cdot)=\xi_j(E_j^o \cap \cdot)+\xi_j(\{o_j\} \cap \cdot)$ for $j=1,2$, {it follows upon inspection that $\xi_1 \perp \xi_2$ can occur only in scenarios (b1), (b2), and (b3), under which the component $\xi_j(\{o_j\} \cap \cdot)$ is degenerate.}
In particular,    if, e.g.,  $\Lambda_{12}^o(y_1=o_1)=\Lambda_2^o(E_2^o)\in (0,\infty)$, the independence relation $\xi_1\perp \xi_2$  no longer holds. To see this, take  $A\in \cl{E}_1^o$ with $\Lambda_1^o(A)\in (0,\infty)$, which is possible since $\Lambda_1^o(E_1^o)=\infty$ and $\Lambda_1^o$ is locally finite on $\cl{E}_1^o$. Then
\begin{align*}
\xi_1(\{o_1\}\cup A)&=\xi_{12}^o\pp{y_1=o_1}+ \xi_{12}^o\pp{ y_1\in A },\\
\xi_2( E_2^o )&= \xi_{12}^o\pp{y_1=o_1},
\end{align*}
where   $\xi_{12}^o\pp{y_1=o_1}$ and $\xi_{12}^o\pp{ y_1\in A }$ are two independent nonzero finite Poisson random variables. Due to the shared term  $\xi_{12}^o\pp{y_1=o_1}$, we conclude dependence between $\xi_1(\{o_1\}\cup A)$ and  $\xi_2( E_2^o )$. Note that the same argument does not apply to, e.g.,   case (b1) since $\xi_{12}^o\pp{y_1=o_1}=\infty$ a.s. in this case.

\noindent $\bullet$ Suppose now $0<\Lambda_{12}^o(E_{12}^o)<\infty$.

In this case, conditioning on $N=n\in \bb{Z}_+$ (the case $n=0$ is trivial),  the point process $\xi_{12}^o$ is equal in distribution to a binomial process $ \sum_{i=1}^n \delta_{(Y_{i1},Y_{i2})}$, where $(Y_{i1},Y_{i2})_{i\in \bb{Z}_+}$ are i.i.d.\ random elements taking value in $E_{12}^o$ with distribution $\Lambda_{12}^o/\Lambda_{12}^o(E_{12}^o)$ (e.g., \cite[Proposition 3.5]{LastPenrose2017}), and  $\xi_{j}\EqD \sum_{i=1}^n \delta_{Y_{ij}}$     with the law $\cl{L}\pp{Y_{ij}}=\Lambda_{12}^o(E_{12}^o)^{-1}\Lambda_j$, $j=1,2$. Then $\xi_1\perp \xi_2\mid \{N=n\}$ for every $n\in \bb{Z}_+$ if and only if $Y_{i1}\perp Y_{i2}$. However,   the support of the distribution of $(Y_{i1},Y_{i2})$ excludes $(o_1,o_2)$, which combined with independence, implies that $$0=\Prt{Y_{i1}= o_1,Y_{i2}=o_2}=\Prt{Y_{i1}= o_1}\Prt{Y_{i2}=o_2}.$$
The scenarios stated in (c) and (d) thus follow.

\end{proof}

Next, we turn to conditional independence. {Denote by $\Lambda_{123^o}$   the restriction of   $\Lambda_{123}^o$ on the product space $E_1\times E_2\times E_3^o$ equipped with  the product $\sigma$-field $\cl{E}_1\otimes \cl{E}_2\otimes \cl{E}_3^o$. 
Since  $\Lambda_j^o$ is $\sigma$-finite, by disintegration (e.g., \cite[Theorem 3.4(iii)]{kallenberg:2021:foundations}), there exists a probability   kernel $\Lambda_{12\mid 3^o}(\cdot\mid \cdot):\cl{E}_{12} \times E_3^o\mapsto [0,1] $, such that
\begin{equation}\label{eq:kernel 12 3o}
\Lambda_{123^o}(A) = \int \mbf{1}_A(y_1,y_2,y_3) \Lambda_3^o(dy_3)  \Lambda_{12\mid 3^o}(dy_1 dy_2|y_3), \quad A\in  \cl{E}_1\otimes \cl{E}_2\otimes \cl{E}_3^o.
\end{equation}
We also introduce the marginal probability kernels
\begin{equation}\label{eq:kernel 13o 23o}
\Lambda_{1\mid 3^o}(\cdot \mid \cdot )=\Lambda_{12\mid 3^o}(\cdot \times E_2 \mid \cdot ), \quad \Lambda_{2\mid 3^o}(\cdot \mid \cdot )=\Lambda_{12\mid 3^o}( E_1\times \cdot  \mid \cdot ).
\end{equation}
}

Let $\xi_{123}^o$ be a Poisson point process on $E_{123}^o$ with mean measure $\Lambda_{123}^o$.  
Define the projected Poisson point process on $E_j$ by
\[
\xi_j(A)
:=
\xi_{123}^o\big(\{(y_1,y_2,y_3)\in E_{123}^o:\; y_j\in A\}\big),
\quad A\in\mathcal E_j,
\quad j=1,2,3.
\]
Note that each $\xi_j$ is a Poisson point process with mean measure $\Lambda_j$ on the non-punctured marginal space $E_j$. 
We emphasize that, in view of Remark~\ref{Rem:loc fin vs non}, each restricted marginal point process 
$$
\xi_j^o:= \xi_j\big|_{\mathcal E_j^{o}}
$$ 
 has a locally finite mean measure $\Lambda_j^o$, and is thus locally finite a.s.. 

Now we synthesize Proposition~\ref{Pro:cond indep prod} and 
Lemma~\ref{Lem:uncond indep} to characterize conditional independence 
$\xi_1 \perp \xi_2 \mid \xi_3$ on the punctured product space 
$\pp{E_{123}^o, \cl{E}_{123}^o}$. 
However, in  the result statement, we shall exclude ``non-explosive'' cases~(c) and~(d) of Lemma~\ref{Lem:uncond indep}, a choice
consistent with the framework adopted in \cite{engelke2025graphical}, 
although these cases can be incorporated with suitable modifications. 
Moreover, under such a restriction, we can formulate a  characterization of conditional independence in terms of a simple functional representation at the level of 
the enumerated points of the Poisson point process. To describe such a representation, we introduce the following measure  on $E_{123}^o$ as 
\begin{equation}\label{eq:eta measure}
{\Lambda^{\perp}_{12;3}(A):=  \Lambda^o_{123}((y_1,o_2,o_3)\in A ) + \Lambda^o_{123}((o_1,y_2,o_3)\in A)+  \Lambda^o_{3}\pp{ (o_1,o_2,y_3)\in A},}
\end{equation}
where $ A\in \cl{E}_{123}^o$.

\begin{Pro}\label{Pro:cond indep punc}
    Suppose Assumption \ref{Ass:puncture} holds, and $\xi_{123}^o$ is a Poisson point process with mean measure $\Lambda_{123}^o$ on the punctured product space $\pp{E_{123}^o,\cl{E}_{123}^o}$. Assume in addition that $\Lambda_{123}^o\pp{E_{123}^o}=\infty$, and that 
    \begin{equation}\label{eq:explosion cond}
        \Lambda^o_{123}\pp{y_j=o_j, y_3=o_3}\in \{0,\infty\},\quad j=1,2.
    \end{equation}
  The following statements are equivalent.
   \begin{enumerate}[label=(\alph*)]
     \item  The conditional independence relation  $\xi_1\perp \xi_2 \mid \xi_3$ holds.
     \item  For $\Lambda_3^o$-a.e.\ $y_3\in E_3^o$, the product measure factorization holds:
    \begin{equation}
     \Lambda_{12\mid 3^o}(\cdot \mid y_3)= \Lambda_{1\mid 3^o}(\cdot\mid y_3)\otimes \Lambda_{2\mid 3^o}(\cdot\mid y_3),
     \end{equation}
     and 
     \begin{equation}
          \Lambda_{123}^o(y_1\neq o_1,\, y_2\neq o_2,y_3=o_3)=0.
     \end{equation}
     \item Let $\sum_{i=1}^\infty \delta_{ \pp{\eta_{i1},\eta_{i2},\eta_{i3}}}$ be an enumerated representation of  a Poisson point process on $E_{123}^o$ with mean measure $\Lambda_{12;3}^\perp$ as in \eqref{eq:eta measure}.  Then there exist   measurable functions $h_1:E_3\times [0,1]\mapsto E_1$   and $h_2: E_3\times [0,1]\mapsto E_2$ satisfying $h_j(o_3,\cdot)\equiv o_j$, $j=1,2$, so that, 
     \begin{equation}\label{eq:func rep cond indep puncture}
     \xi_{123}^o \EqD 
    \sum_{i=1}^\infty \delta_{\pp{h_1(\eta_{i3},\theta_{i1})+\eta_{i1},h_2(\eta_{i3},\theta_{i2})+\eta_{i2}, \eta_{i3}} },   
     \end{equation}
where $\pp{\theta_{i1}}_{i\in \bb{Z}_+}$ and $\pp{\theta_{i2}}_{i\in \bb{Z}_+}$  are  two i.i.d.\ $\mathrm{Uniform}(0,1)$ sequences which are mutually independent and independent of $\sum_{i=1}^\infty \delta_{ \pp{\eta_{i1},\eta_{i2},\eta_{i3}}}$, and the plus sign $+$ is understood via $y_j+o_j=o_j+y_j=y_j$ for any $y_j\in E_j$, $j=1,2$.
 \end{enumerate}
\end{Pro}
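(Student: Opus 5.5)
The plan is to split $\xi_{123}^o$ into two independent Poisson processes according to whether the third coordinate vanishes: $\xi' := \xi_{123}^o|_{\{y_3\neq o_3\}}$ with mean measure $\Lambda_{123^o}$, and $\xi'' := \xi_{123}^o|_{\{y_3=o_3\}}$ with mean measure $\Lambda''$ concentrated on $E_{12}^o\times\{o_3\}$. Then $\xi_3^o$ is a function of $\xi'$ alone, while $\xi_3(\{o_3\})=\xi''(E_{12}^o)=:N$. We write $\xi_j=\xi_j'+\xi_j''$ for $j=1,2$, with primes denoting the projections of $\xi'$ and $\xi''$. Since $\sigma(\xi_3)=\sigma(\xi_3^o,N)$ and $\xi'\perp\xi''$, the relation $\xi_1\perp\xi_2\mid\xi_3$ should be equivalent to the pair of relations
\[
\xi_1'\perp\xi_2'\mid\xi_3^o
\quad\text{and}\quad
\xi_1''\perp\xi_2''\mid N.
\]
The ``if'' direction is a routine conditional-independence manipulation: given $(\xi_3^o,N)$, the pairs $(\xi_1',\xi_2')$ and $(\xi_1'',\xi_2'')$ are conditionally independent, and the sums of conditionally independent pairs remain conditionally independent.

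The ``only if'' direction is the main obstacle, since $\xi_j$ is only the sum $\xi_j'+\xi_j''$. I would handle it with Laplace functionals. Fix $A_j\in\mathcal E_j$ with $\Lambda_j(A_j)<\infty$ (hence $o_j\notin A_j$ unless the atom is finite). Conditionally on $(\xi_3^o,N)$, the conditional joint Laplace transform of $(\xi_1(A_1),\xi_2(A_2))$ factorizes as the product of the $\xi'$ part, computed as in Lemma~\ref{Lem:cond mean cov}, and the $\xi''$ part. Taking the mixed derivative of the log, exactly as in Lemma~\ref{Lem:cond mean cov}, the conditional covariance splits additively:
\[
\int\bigl[\Lambda_{12\mid3^o}(A_1\times A_2\mid y_3)-\Lambda_{1\mid3^o}(A_1\mid y_3)\Lambda_{2\mid3^o}(A_2\mid y_3)\bigr]\xi_3^o(dy_3)+c(N).
\]
Here $c(N)$ is the conditional covariance of the $\xi''$ part. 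Condition \eqref{eq:explosion cond} forces each of $\Lambda''(y_1=o_1)$ and $\Lambda''(y_2=o_2)$ to be $0$ or $\infty$. The case where the whole of $\Lambda''$ is finite and positive is excluded, because it would then equal the finite sum of two values in $\{0,\infty\}$, which is $0$. If $\Lambda''=0$, the second relation is vacuous. Otherwise $N=\infty$ a.s. and $c(N)=\Lambda''(A_1\times A_2\times\{o_3\})$ is a deterministic constant. Vanishing of the sum for all such $A_1,A_2$ gives a Poisson integral equal to a constant a.s. The proof of Lemma~\ref{Lem:uniq} (the strict-convexity step, which shows a Poisson integral equal to a constant forces the integrand to vanish) then yields that both the integrand and the constant are zero, provided $\Lambda_3^o(E_3^o)=\infty$ or we argue on the event $\xi_3^o=0$. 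With countable generating $\pi$-systems as in the proof of Proposition~\ref{Pro:cond indep prod}, this gives both the factorization in (b) and $\Lambda''(E_1^o\times E_2^o\times\{o_3\})=0$. Running the arguments backwards proves (b)$\Rightarrow$(a): by Proposition~\ref{Pro:cond indep prod} on $E_1\times E_2\times E_3^o$ (whose Assumption~\ref{Ass:product} holds by Remark~\ref{Rem:loc fin vs non}), and by Lemma~\ref{Lem:uncond indep}(b), whose subcases are exactly what \eqref{eq:explosion cond} allows once the off-axis mass vanishes, one obtains the two relations displayed above.

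For (b)$\Leftrightarrow$(c), under (b) I would build $h_1,h_2$ on $E_3^o$ from the kernels via \cite[Lemma 4.22]{kallenberg:2021:foundations} as in Proposition~\ref{Pro:cond indep prod}, and extend them by $h_j(o_3,\cdot)\equiv o_j$. The representation in (c) then splits into three parts: the points with $\eta_{i3}\neq o_3$ reproduce $\xi'$ (using the $\Lambda_3^o$ term of $\Lambda^\perp_{12;3}$ and the transform theorem), while the points $(\eta_{i1},o_2,o_3)$ and $(o_1,\eta_{i2},o_3)$ reproduce $\xi''$, because by (b) $\Lambda''$ lives on the two axes. Independence of the three parts and superposition then give \eqref{eq:func rep cond indep puncture}; the infinite enumeration is available since $\Lambda^\perp_{12;3}$ has infinite mass, being equal to $\Lambda_{123}^o(E_{123}^o)$ under (b). Conversely, given (c), restricting to $\eta_{i3}\neq o_3$ and to $\eta_{i3}=o_3$ recovers independent pieces of the forms required in (b), so (b) holds, or alternatively (a) follows directly by the functional argument of Proposition~\ref{Pro:cond indep prod}, (c)$\Rightarrow$(a).
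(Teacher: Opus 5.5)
Your route is the paper's route: split $\xi_{123}^o$ at $\{y_3=o_3\}$, apply Proposition~\ref{Pro:cond indep prod} on $E_1\times E_2\times E_3^o$, and apply Lemma~\ref{Lem:uncond indep} to $\xi''$. Your additive splitting of the conditional covariance is more careful than the paper, which only asserts the ``only if'' half of that reduction.

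The step that fails is your exclusion of $0<\Lambda''(E_{12}^o\times\{o_3\})<\infty$. The total mass of $\Lambda''$ is not $\Lambda''(y_1=o_1)+\Lambda''(y_2=o_2)$. It also contains the off-axis mass $\Lambda_{123}^o(y_1\neq o_1,\,y_2\neq o_2,\,y_3=o_3)$, which is exactly what (b) asks you to prove is zero, and \eqref{eq:explosion cond} says nothing about it. In that regime $N$ is a nondegenerate Poisson variable. The $\xi''$-term is then $c(N)=\kappa(A_1,A_2)\,N$, where $\kappa(A_1,A_2)$ is the covariance of $\mathbf 1_{A_1}(Y_1)$ and $\mathbf 1_{A_2}(Y_2)$ under the normalized law of $\Lambda''$. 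Since $N$ is independent of $\xi_3^o$, vanishing of $\int f\,d\xi_3^o+\kappa N$ only forces $\kappa\equiv 0$ and $\int f\,d\xi_3^o=0$. That says the normalized $\Lambda''$ is a product measure on $E_1^o\times E_2^o$, which is case (c) of Lemma~\ref{Lem:uncond indep}. It does not say the off-axis mass vanishes.

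No argument can close this gap, because (a)$\Rightarrow$(b) is false under the stated hypotheses. Take $E_j=\mathbb R$, $o_j=0$, $L_{h,j}=\{|y|>1/h\}$, and $\Lambda_{123}^o=\delta_{(0,0)}\otimes\nu+\delta_{(1,1,0)}$, where $\nu(dy_3)=y_3^{-2}\mathbf 1_{\{y_3>0\}}\,dy_3$ is placed on $\{(0,0,y_3)\}$.
\begin{itemize}
\item Assumption~\ref{Ass:puncture} and $\Lambda_{123}^o(E_{123}^o)=\infty$ hold, and both sets in \eqref{eq:explosion cond} are null.
\item Almost surely $\xi_1=\xi_2=\infty\cdot\delta_0+\xi_3(\{0\})\,\delta_1$. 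Both are $\sigma(\xi_3)$-measurable, so (a) holds.
\item However $\Lambda_{123}^o(y_1\neq 0,\,y_2\neq 0,\,y_3=0)=1$, so (b) and (c) both fail.
\end{itemize}

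The paper's one-line proof has the same hole. It invokes only cases (a)--(b) of Lemma~\ref{Lem:uncond indep}, but case (c) of that lemma is compatible with \eqref{eq:explosion cond}.

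The fix is to add the hypothesis $\Lambda_{123}^o(y_3=o_3)\in\{0,\infty\}$. Assumption~\ref{Ass:puncture graph}(iv) with $A=\{1,2\}$, $B=\{3\}$ supplies it wherever the proposition is used in Theorem~\ref{Thm:characterization cond indep}. With that hypothesis your exclusion step is valid and the rest of your argument goes through. Your directions (b)$\Rightarrow$(a) and (b)$\Leftrightarrow$(c) are correct as written and do not need it.
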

 \begin{proof}
We decompose the Poisson point process $\xi_{123}^o$ into two independent components:
\[
\xi_{123}^o(\cdot)
=
\xi_{123}^o\bigl(\cdot \cap \{y_3 \neq o_3\}\bigr)
+
\xi_{123}^o\bigl(\cdot \cap \{y_3 = o_3\}\bigr).
\]
The conditional independence relation $\xi_1 \perp \xi_2 \mid \xi_3$ for the marginal projections of $\xi_{123}^o$ is therefore equivalent to the corresponding conditional independence relations for the marginal projections of each  component above.
Note that   the component $\xi_{123}^o\bigl(\cdot \cap \{y_3 \neq o_3\}\bigr)$ may be viewed as a Poisson point process on $E_1\times E_2\times E_3^o$, and the component $\xi_{123}^o\bigl(\cdot \cap \{y_3 = o_3\}\bigr)$ may be viewed as a Poisson point process on $E_{12}^o$.
Then the conclusion follows from a synthesis of Proposition \ref{Pro:cond indep prod} and Lemma \ref{Lem:uncond indep} cases (a)$\sim$(b), noting that conditioning on the marginal projection of $\xi_{123}^o\bigl(\cdot \cap \{y_3 = o_3\}\bigr)$ onto $E_3$  is the same as conditioning on the number of points of $\xi_{123}^o\bigl(\cdot \cap \{y_3 = o_3\}\bigr)$.

 \end{proof}

 \begin{Rem}
 The functional representation at the point-level in 
\eqref{eq:func rep cond indep puncture} can be compared with a similar representation that 
arises in the context of extremal structural causal models 
\cite{fang2025structural}. In particular, when interpreting 
$y_1 \in E_1$, $y_2 \in E_2$, and $y_3 \in E_3$ as three variables, 
the representation \eqref{eq:func rep cond indep} corresponds to the 
structural equation associated with the directed graphical model 
$1 \leftarrow 3 \rightarrow 2$, which encodes the conditional 
independence relation $y_1 \perp y_2 \mid y_3$.
 \end{Rem} 

\section{General dimensions}\label{Sec:graph}

We begin by describing a setup that   extends Assumption~\ref{Ass:puncture} to higher-dimensional settings. Let $V=\{1,\ldots,d\}$, where $d\in\bb{Z}_+$, and let $(E_v,\cl{E}_v)$, $v\in V$, be measurable spaces, each equipped with a distinguished point $o_v\in E_v$.
For $A \subset V$, define the product space $E_A=\prod_{v \in A} E_v$ and the corresponding punctured space $E_A^o = E_A \setminus \{o_A\}$, where $o_A := (o_v)_{v \in A}$. We denote the induced product $\sigma$-field on $E_A$ by $\cl{E}_A$, and define
$
\cl{E}_A^o = \{G \cap E_A^o :\, G \in \cl{E}_A\}.
$
When $A=\{v\}$, we simply write $E_v$ and $E_v^o$. Suppose $\Lambda_V^o$ is a measure on $(E_V^o,\cl{E}_V^o)$.  For $A \subseteq V$ and $y \in E_V$, we write 
$y_A = (y_v)_{v \in A}$ for the corresponding subvector.
We shall work with the following set of assumptions.
\begin{Ass}\label{Ass:puncture graph}~
\begin{enumerate}[label=(\roman*)]

\item The measurable spaces $(E_v,\cl{E}_v)$, $v\in V$, are Borel.

\item Each marginal punctured space $E_v^o$ is localized by an increasing sequence $(L_{h,v})_{h\in\bb{Z}_+}$ with $L_{h,v}\in\cl{E}_v^o$. For every nonempty subset $A \subset V$, the punctured product space $E_A^o$ is localized by the sequence
\[
L_{h,A}
:= \bigcup_{a\in A} \left( L_{h,a} \times E_{A\setminus\{a\}} \right),
\qquad h\in\bb{Z}_+.
\]

\item The measure $\Lambda_V^o$ on $(E_V^o,\cl{E}_V^o)$ is locally finite.

  \item For all disjoint subsets $A,B\subset V$ with $A\neq\emptyset$,
\[
\Lambda_V^o \left(  y_A\neq o_A,\ y_B=o_B \right)\in\{0,\infty\}.
\]
\end{enumerate}
\end{Ass}

\begin{Rem}\label{Rem:localization consistent and E1}
The localization scheme satisfies the consistency property: for disjoint subsets
$A,B\subset V$ and all $h\in\bb{Z}_+$, we have
$$
L_{h,A\cup B}
=
\bigl(L_{h,A} \times E_B\bigr)
\cup
\bigl(E_A \times L_{h,B}\bigr).
$$
Note also that  condition (iv) implies $\Lambda_V^o\pp{E_{V}^o}\in\{0,\infty\}$.
\end{Rem}
{
The following lemma links  condition (iv) in Assumption \ref{Ass:puncture graph} to the explosiveness condition mentioned in \eqref{eq:E1}.
\begin{Lem}\label{Lem:E1 equiv}
Assumption \ref{Ass:puncture graph} (iv) holds if and only if
for every $D\subset V$ and every $d\in D$, we have
\begin{equation}\label{eq:E1 abstract}
\Lambda_V^o\!\left(  y_d\neq o_d,\ y_{V\setminus D}=o_{V\setminus D} \right)\in\{0,\infty\}.
\end{equation}
\end{Lem}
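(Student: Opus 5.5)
One direction is immediate; the real work is the converse. The plan is to reduce both conditions to statements about the sets
\[
S_{A,B}:=\{y\in E_V^o:\ y_A\neq o_A,\ y_B=o_B\}, \qquad
T_{d,D}:=\{y\in E_V^o:\ y_d\neq o_d,\ y_{V\setminus D}=o_{V\setminus D}\}.
\]

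\textbf{(iv) implies \eqref{eq:E1 abstract}.} Given $D\subset V$ and $d\in D$, take $A=\{d\}$ and $B=V\setminus D$. These are disjoint with $A\neq\emptyset$, and $T_{d,D}=S_{\{d\},V\setminus D}$. So condition (iv) applies directly.

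\textbf{\eqref{eq:E1 abstract} implies (iv).} Fix disjoint $A,B$ with $A\neq\emptyset$, and set $D:=V\setminus B\supseteq A$. The set $S_{A,B}$ requires some coordinate of $y_A$ to differ from $o$, together with $y_{V\setminus D}=o_{V\setminus D}$. Hence
\[
S_{A,B}=\bigcup_{a\in A} T_{a,D},
\]
and each $T_{a,D}$ is a set covered by \eqref{eq:E1 abstract}, since $a\in D$.

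\textbf{Concluding from the union.} By \eqref{eq:E1 abstract}, every $T_{a,D}$ with $a\in A$ has measure in $\{0,\infty\}$. If all of them are null, subadditivity over the finite union gives $\Lambda_V^o(S_{A,B})=0$. If some $T_{a,D}$ has infinite measure, then monotonicity gives $\Lambda_V^o(S_{A,B})\ge\Lambda_V^o(T_{a,D})=\infty$. In either case $\Lambda_V^o(S_{A,B})\in\{0,\infty\}$, which is (iv).

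The only point needing care is the set identity: one should check that the union really covers $S_{A,B}$ (it uses $A\subseteq D$, i.e.\ disjointness of $A$ and $B$), and that all sets lie in $E_V^o$ and are measurable, which they are as finite Boolean combinations of cylinder sets. Neither Borel structure nor localization is needed anywhere in this argument.
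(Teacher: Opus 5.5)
Your proposal is correct and follows the paper's proof essentially verbatim. The forward direction takes $A=\{d\}$, $B=V\setminus D$, and the converse sets $D=V\setminus B$ and writes $\{y_A\neq o_A,\ y_B=o_B\}$ as the finite union over $a\in A$ of the sets $\{y_a\neq o_a,\ y_B=o_B\}$, each of measure $0$ or $\infty$; your explicit subadditivity/monotonicity step simply spells out the paper's closing ``the conclusion follows.''
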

}
\begin{proof}
Suppose Assumption \ref{Ass:puncture graph} (iv) holds. Then for any $D\subset V$ and $d\in D$,
taking $A=\{d\}$ and $B=V\setminus D$ gives \eqref{eq:E1 abstract}.

Conversely, assume \eqref{eq:E1 abstract} holds.
Let $A,B\subset V$ be disjoint with $A\neq\emptyset$. Then
\[
\{y_A\neq o_A,\ y_B=o_B\}
=
\bigcup_{a\in A}\{y_a\neq o_a,\ y_B=o_B\}.
\]
Setting in \eqref{eq:E1 abstract}  $D=V\setminus B$, we infer that for each $a\in A\subset D$, we have
$$
\Lambda_V^o\!\left(y_a\neq o_a,\ y_B=o_B\right)\in\{0,\infty\}.
$$
Then the conclusion Assumption \ref{Ass:puncture graph} (iv) follows.
\end{proof}

\begin{Eg}
As a natural extension of the setup in \cite{engelke2025graphical}, 
where each variable is scalar-valued, one may consider a vector-valued 
framework in which $E_v = \mathbb{R}^{d_v}$ with $d_v \in \mathbb{Z}_+$ 
for each $v \in V$. We take $o_v$ to be the origin of 
$\mathbb{R}^{d_v}$ and define the punctured space 
$E_v^o := \mathbb{R}^{d_v} \setminus \{o_v\}$, for which we introduce the localization sequence 
\[
L_{h,v} = \{ y \in \mathbb{R}^{d_v} : \|y\| > 1/h \}, 
\quad h \in \mathbb{Z}_+,
\]
where $\|\cdot\|$ denotes an arbitrary norm on 
$\mathbb{R}^{d_v}$.   

The space $E_v=\bb{R}^{d_v}$ may also be replaced by the half space  $[0,\infty)^{d_v}$, a natural choice in extreme value theory.
\end{Eg}

We now extend Definition \ref{Def:Lambda cond indep} to the abstract setup of Assumption \ref{Ass:puncture graph}.
 Define similarly as \eqref{eq:R test class} that
\begin{equation}\label{eq:R test class abstract}
\mathcal{R}\left(\Lambda_V^o\right)
=\pc{R=\prod_{v\in V} R_v : 
R_v \in  \cl{E}_v ,\   \Lambda^o_V(R)>0,\ R \text{ is bounded)}},
\end{equation}
where recall that $R$ being bounded means    $R\subset L_{h,V}$ for some $h$.

\begin{Def}\label{Def:cond ind abs}
Let $A,B,C\subseteq V$ be disjoint sets forming a partition of $V$.
We say that $\Lambda_V^o$ admits the conditional independence relation
\[
A \perp B \mid C
\]
if, for every $R\in\mathcal{R}(\Lambda_V^o)$ in  \eqref{eq:R test class abstract}, the  probabilistic
conditional independence
\[
Y_A \perp Y_B \mid Y_C
\]
holds, where $Y=(Y_v)_{v\in V}$ is an $R$-valued random vector with law
\begin{equation}\label{eq:P_R law}
P_R(G)
:=
\frac{\Lambda_V^o(G)}{\Lambda_V^o(R)},
\qquad
G\in\cl{E}_V,\; G\subseteq R.
\end{equation}

If $C=\emptyset$,  we understand the relation as unconditional independence $A\perp B$.
If $A=\emptyset$ or $B=\emptyset$, the conditional or unconditional independence relation is understood to hold 
trivially.

If $A,B,C$ do not form a partition of $V$, the definition is applied to
$\Lambda_D^o$, where $D=A\cup B\cup C$ and $\Lambda_D^o$ denotes the
marginal measure induced by $\Lambda_V^o$ on $E_D^o$.
\end{Def}

Next, we proceed to provide equivalent characterizations of Definition \ref{Def:cond ind abs}, for which we prepare some notation.
For $v\in V$ and $h\in\bb{Z}_+$, define the   sets
\[
R_{h,v}
:=
\{y\in E_V:\; y_v\in L_{h,v}\}
=
L_{h,v}\times E_{V\setminus\{v\}}.
\]
Let $P_{R_{h,v}}$ denote the normalized probability measure induced by
$\Lambda_V^o$ on $R_{h,v}$ as in \eqref{eq:P_R law}.

\begin{Thm}\label{Thm:characterization cond indep}
Suppose Assumption \ref{Ass:puncture graph} holds.
Let $A,B,C\subseteq V$ be disjoint sets forming a partition of $V$. Suppose $A\neq \emptyset$ and $B\neq \emptyset$ (otherwise, all of the statements below are understood as to hold trivially).
The following statements are equivalent:

\begin{enumerate}[label=(\alph*)]

\item The conditional independence relation $A\perp B\mid C$ holds in the
sense of Definition~\ref{Def:cond ind abs}.

\item The probabilistic conditional independence $Y_A\perp Y_B\mid Y_C$
holds with $Y\sim P_{R_{h,v}}$  for all $v\in V$ and $h\in\bb{Z}_+$, where if $C=\emptyset$, the relation is understood as  unconditional independence.

\item If $C\neq\emptyset$, the probabilistic conditional independence $Y_A\perp Y_B\mid Y_C$
holds for all $h\in\bb{Z}_+$ and $c\in C$, with $Y\sim P_{R_{h,c}}$, and
additionally
\begin{equation}\label{eq:A perp B C=0}
\Lambda_V^o(y_A\neq o_A,\; y_B\neq o_B,\; y_C=o_C)=0.
\end{equation}
If $C=\emptyset$, we  have  
\begin{equation}\label{eq:A perp B}
\Lambda_V^o(y_A\neq o_A,\; y_B\neq o_B)=0.
\end{equation}
\item If $C\neq \emptyset$,  for $\Lambda_C^o$-a.e.\ $y_C\in E_C$, we have the following  product measure factorization:
\[
\Lambda_{AB\mid C^o}(\cdot\mid y_C)
=
\Lambda_{A\mid C^o}(\cdot\mid y_C)
\otimes
\Lambda_{B\mid C^o}(\cdot\mid y_C),
\]
where 
$
\Lambda_{AB\mid C^o}(\cdot \mid \cdot)
:\cl{E}_{A\cup B} \times E_C^o \to [0,1]
$,
$
\Lambda_{A\mid C^o}(\cdot \mid  \cdot )
:\cl{E}_A \times E_C^o \to [0,1]$ and $  \Lambda_{B\mid C^o}(\cdot \mid \cdot  )
:\cl{E}_B \times E_C^o \to [0,1]
$
are  probability kernels defined similarly as those in \eqref{eq:kernel 12 3o} and \eqref{eq:kernel 13o 23o},
 and additionally, the relation \eqref{eq:A perp B C=0} holds.  If $C=\emptyset$, we have \eqref{eq:A perp B}.

\item Let $\xi_V^o$ be a Poisson point process on $(E_V^o,\cl{E}_V^o)$ with
mean measure $\Lambda_V^o$.
  We have
the following probabilistic conditional independence:
\[
\xi_A \perp \xi_B \mid \xi_C,
\]
where for for $I\subset V$,   
$
\xi_I(\cdot)
:=
\xi_V^o(y_I\in\cdot)
$,   and the relation above is understood as unconditional independence if $C=\emptyset$.
\end{enumerate}
\end{Thm}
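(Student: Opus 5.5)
The plan is to prove (a)$\Rightarrow$(b)$\Rightarrow$(c)$\Rightarrow$(d)$\Rightarrow$(a), and then attach (e) by reducing to Proposition~\ref{Pro:cond indep punc}.

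\textbf{Reduction for (e).} Group the coordinates into three blocks, $E_1:=E_A$, $E_2:=E_B$, $E_3:=E_C$, with distinguished points $o_A,o_B,o_C$. Assumption~\ref{Ass:puncture graph} then gives Assumption~\ref{Ass:puncture}. Borelness is preserved by finite products. The localization matches the required form because of the consistency property in Remark~\ref{Rem:localization consistent and E1}.

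\textbf{Main case $C\neq\emptyset$, $\Lambda_V^o(E_V^o)=\infty$.} Condition \eqref{eq:explosion cond} is exactly Assumption~\ref{Ass:puncture graph}(iv) applied to the pairs $(A,B\cup C)$ and $(B,A\cup C)$. Proposition~\ref{Pro:cond indep punc}(a)$\Leftrightarrow$(b) then gives (e)$\Leftrightarrow$(d) directly.

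\textbf{Case $C=\emptyset$.} Use Lemma~\ref{Lem:uncond indep}. Cases (c) and (d) of that lemma are excluded by (iv), since then $\Lambda^o_V(E_V^o)\in\{0,\infty\}$. Among the remaining cases, one must check that (b1)--(b3) are exactly \eqref{eq:A perp B}. Given \eqref{eq:A perp B}, we have $\Lambda(y_A=o_A)=\Lambda(y_B\neq o_B,y_A=o_A)\in\{0,\infty\}$ by (iv), and similarly for $B$. If both vanish, we land in the trivial case (a) of the lemma.

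\textbf{(a)$\Rightarrow$(b).} Each $R_{h,v}$ is a bounded product set; if $\Lambda_V^o(R_{h,v})=0$ the claim is vacuous.

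\textbf{(b)$\Rightarrow$(c).} Restrict to $v=c\in C$. For \eqref{eq:A perp B C=0}, take $v=a\in A$ and argue by contradiction. Suppose the set $\{y_A\neq o_A,y_B\neq o_B,y_C=o_C\}$ has positive mass. Then for large $h$, under $P_{R_{h,a}}$ the event $\{Y_C=o_C\}$ has positive probability. Conditionally on it, $Y_A$ and $Y_B$ must be independent. But the conditional law charges $\{Y_B\neq o_B\}$ with positive probability. We also need $\{Y_B=o_B\}$ charged; this follows from (iv) with $\{y_a\ne o_a, y_{B\cup C}=o\}$: that set has mass $0$ or $\infty$, and if infinite it is not locally finite on $L_{h,a}$ unless $0$ is there. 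Hmm — more simply, use $v=b\in B$ instead. Under $P_{R_{h,b}}$ given $Y_C=o_C$ we have $Y_B\neq o_B$ surely. The issue is then that $Y_A$ takes both values $o_A$ and non-$o_A$ values. Actually, the cleanest route is to note that conditional independence on $R_{h,a}\cap R_{h,b}$-type reasoning mirrors Lemma~\ref{Lem:uncond indep}: independence plus exclusion of $(o_A,o_B)$ forces one coordinate to be degenerate at its non-$o$ side. Combining this with the local finiteness of $\Lambda$ on $L_{h,a}$ and the explosiveness from (iv) yields the contradiction. I expect this step to be the main obstacle and would work it out carefully via the binomial-sample argument of Lemma~\ref{Lem:uncond indep}.

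\textbf{(c)$\Rightarrow$(d).} The law $P_{R_{h,c}}$ restricted to $y_c\in L_{h,c}$ has conditional law of $(Y_A,Y_B)$ given $Y_C=y_C$ equal to $\Lambda_{AB\mid C^o}(\cdot\mid y_C)$. Hence its factorization holds a.e.\ on $\bigcup_h\{y_c\in L_{h,c}\}$. The union over $c\in C$ covers $E_C^o$.

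\textbf{(d)$\Rightarrow$(a).} Take a bounded product $R=R_A\times R_B\times R_C$ and split on whether $o_C\in R_C$. On the part where $y_C\neq o_C$, the kernel factorization restricted to the product $R_A\times R_B$ preserves product form, giving conditional independence there. On $\{y_C=o_C\}$, \eqref{eq:A perp B C=0} means the mass lies in $\{y_A=o_A\}\cup\{y_B=o_B\}$. Boundedness of $R$ together with (iv) forces one of these pieces to carry zero mass whenever $o_A\in R_A$ or $o_B\in R_B$. For instance, if $o_B\in R_B$ and $o_C\in R_C$, then $R\cap\{y_B=o_B,y_C=o_C\}\subset\{y_A\neq o_A, y_{B\cup C}=o\}\cap L_{h,V}$. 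This set has finite mass, and by (iv) the mass is either $0$ or the whole piece is infinite. One must still show that an infinite whole piece does not allow positive mass in $R$; I would instead split $R$ and use atoms. So conditional on $Y_C=o_C$, one of $Y_A,Y_B$ is a.s.\ constant, which gives independence. Since $\{Y_C=o_C\}$ is an atom, combining the two parts yields $Y_A\perp Y_B\mid Y_C$ under $P_R$. The case $C=\emptyset$ is analogous, with only the atom argument.
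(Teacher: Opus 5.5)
Your treatment of (e) matches the paper. You group the coordinates into the blocks $A,B,C$ and check Assumption~\ref{Ass:puncture} via the consistency of the localization. You then invoke Proposition~\ref{Pro:cond indep punc} when $C\neq\emptyset$, with (iv) supplying \eqref{eq:explosion cond}, and Lemma~\ref{Lem:uncond indep} when $C=\emptyset$, with (iv) excluding its cases (c)--(d) and, under \eqref{eq:A perp B}, forcing one of (b1)--(b3). For (a)--(d) the paper gives no argument and imports \cite[Theorems~4.1 and~4.4]{engelke2025graphical}, so your direct cycle is a different route. Its steps (a)$\Rightarrow$(b) and (c)$\Rightarrow$(d) are fine, but the cycle is not closed.

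The gap is in (b)$\Rightarrow$(c): the derivation of \eqref{eq:A perp B C=0} is left open, and none of the routes you sketch can supply it.
Under $P_{R_{h,a}}$ one has $Y_A\neq o_A$ surely. So independence on the atom $\{Y_C=o_C\}$ is compatible with $Y_B$ charging both $\{o_B\}$ and $E_B^o$, and symmetrically for $R_{h,b}$. Statement (b) asserts nothing on $R_{h,a}\cap R_{h,b}$, and there the exclusion of $(o_A,o_B)$ is vacuous anyway. The binomial argument of Lemma~\ref{Lem:uncond indep} concerns finite total mass, i.e.\ exactly the non-explosive situation. Finally, infinite mass of $\{y_a\neq o_a,\,y_{B\cup C}=o_{B\cup C}\}$ does not contradict local finiteness.

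The missing idea is to apply (b) along the whole increasing family $R_{k,a}$, $k\to\infty$, and play the explosion in (iv) against local finiteness. Suppose the set in \eqref{eq:A perp B C=0} had positive mass, and choose $a\in A$, $b\in B$ and $h,h'$ with $\Lambda_V^o(y_a\in L_{h,a},\,y_b\in L_{h',b},\,y_C=o_C)>0$. For $k\ge h$, independence of the events $\{Y_a\in L_{h,a}\}$ and $\{Y_b\in L_{h',b}\}$ given the atom $\{Y_C=o_C\}$ under $P_{R_{k,a}}$ yields
\[
\Lambda_V^o(y_a\in L_{k,a},\,y_b\in L_{h',b},\,y_C=o_C)
=m_k\,\frac{\Lambda_V^o(y_a\in L_{h,a},\,y_b\in L_{h',b},\,y_C=o_C)}{\Lambda_V^o(y_a\in L_{h,a},\,y_C=o_C)},
\qquad m_k:=\Lambda_V^o(y_a\in L_{k,a},\,y_C=o_C).
\]
Here $m_k\uparrow\Lambda_V^o(y_a\neq o_a,\,y_C=o_C)$. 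This limit is positive, hence infinite by (iv) applied to the pair $(\{a\},C)$. So the left side diverges, although it is bounded by $\Lambda_V^o(y_b\in L_{h',b})<\infty$. The case $C=\emptyset$ is identical and gives \eqref{eq:A perp B}. This is the only point of your cycle where (iv) is indispensable: a nonzero finite product measure on $E_A^o\times E_B^o\times\{o_C\}$ satisfies (b) but violates \eqref{eq:A perp B C=0}.

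Conversely, (d)$\Rightarrow$(a) needs no (iv), and the worry you leave open there is spurious. Since $R\subset L_{h,V}\not\ni o_V$ and $R$ is a product, if $o_C\in R_C$ then, say, $o_A\notin R_A$. Hence $R\cap\{y_A=o_A\}=\emptyset$, and \eqref{eq:A perp B C=0} forces $Y_B=o_B$ a.s.\ on $\{Y_C=o_C\}$, which gives independence on the atom.
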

\begin{proof}
We assume $\Lambda_V^o(E_V^o) = \infty$. The only remaining case 
(see Remark~\ref{Rem:localization consistent and E1}), namely 
$\Lambda_V^o(E_V^o) = 0$, is trivial.

The proof of the equivalences among (a) $\sim$ (d)  follow 
arguments entirely analogous to those in \cite{engelke2025graphical}, 
with straightforward modifications arising from the abstract 
localization scheme. In particular, the equivalences (a)$\iff$(b) and (a)$\iff$(c) correspond 
to \cite[Theorem~4.1]{engelke2025graphical}, while the equivalence  (a)$\iff$(d) 
corresponds to \cite[Theorem~4.4]{engelke2025graphical}.

It remains to establish the equivalence (d) $\iff$ (e). 
If $C \neq \emptyset$, the result follows from 
Proposition~\ref{Pro:cond indep punc}. Indeed, condition (iv) 
in Assumption~\ref{Ass:puncture graph} guarantees that 
condition~\eqref{eq:explosion cond} in that proposition is satisfied. 
If $C = \emptyset$, the claim follows from 
Lemma~\ref{Lem:uncond indep} applied to the case where the total 
measure (denoted there by $\Lambda_{12}^o(E_{12}^o)$) is infinite. 
\end{proof}
\begin{Rem}\label{Rem:charac}
Theorem~\ref{Thm:characterization cond indep} is stated, for simplicity,
under the assumption that $A$, $B$, and $C$ form a partition of $V$.
The extension beyond the partition case is straightforward. In particular,
for statements (b)--(d), one replaces the role of $V$ by
$D:=A\cup B\cup C$ and the role of $\Lambda_V^o$ by the marginal measure
$\Lambda_D^o$.

For statement (e), on the other hand, no modification is required. This follows from the
independence decomposition
\[
\xi_V^o(\cdot)
=
\xi_V^o(\cdot \cap \{y_D=o_D\})
+
\xi_V^o(\cdot \cap \{y_D\neq o_D\}),
\]
together with the observation that the component
$\xi_V^o(\cdot \cap \{y_D=o_D\})$ trivially satisfies the corresponding
conditional independence relation.
\end{Rem}

\begin{Cor}
Under Assumption \ref{Ass:puncture graph}, the conditional independence relation defined in Definition \ref{Def:cond ind abs} satisfies the semigraphoid axioms (L1)$\sim$(L4) in Section \ref{Sec:intro}.     
\end{Cor}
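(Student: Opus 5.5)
The plan is to transfer the semigraphoid axioms from classical probabilistic conditional independence, using the Poisson point process characterization in Theorem~\ref{Thm:characterization cond indep}(a)$\iff$(e) together with Remark~\ref{Rem:charac}. Fix a Poisson point process $\xi_V^o$ on $(E_V^o,\cl{E}_V^o)$ with mean measure $\Lambda_V^o$. For any disjoint $A,B,C\subseteq V$, the relation $A\perp B\mid C$ in the sense of Definition~\ref{Def:cond ind abs} is then equivalent to $\xi_A\perp\xi_B\mid\xi_C$. This holds even when $A,B,C$ do not partition $V$, by Remark~\ref{Rem:charac}, and the same process $\xi_V^o$ serves all triples simultaneously. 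Classical conditional independence of random elements satisfies (L1)--(L4) on any probability space. So the task reduces to matching the projected processes of unions of index sets with the pairs of projected processes.

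The key step is the identification $\sigma(\xi_{B\cup D})=\sigma(\xi_B,\xi_D)$ for disjoint $B,D$. This lets us replace $\xi_{B\cup D}$ by the pair $(\xi_B,\xi_D)$ in every conditional independence statement, and likewise for $\xi_{C\cup D}$ and $\xi_{B\cup C}$.

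The inclusion $\supseteq$ is immediate, since $\xi_B$ and $\xi_D$ are measurable images of $\xi_{B\cup D}$.

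The reverse inclusion is the main obstacle. In general, the joint point measure is not a function of its marginals, because the pairing of coordinates is lost. I would handle this through the punctured structure and the enumeration. Write $\xi_{B\cup D}=\xi_{B\cup D}(\cdot\cap\{y_{B\cup D}\neq o_{B\cup D}\})+\xi_{B\cup D}(\cdot\cap\{y_{B\cup D}=o_{B\cup D}\})$. The second piece is an infinite or zero multiple of $\delta_{o_{B\cup D}}$, and its value is deterministic because of Assumption~\ref{Ass:puncture graph}(iv) and the explosiveness of $\Lambda_V^o$.

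Rather than proving a $\sigma$-field identity directly, I would avoid it. Instead, I would reapply Theorem~\ref{Thm:characterization cond indep} to $\Lambda_D^o$-type marginals and work with the equivalent kernel formulation (d). In that formulation each axiom becomes a statement about disintegration kernels $\Lambda_{AB\mid C^o}$ together with support conditions of the form \eqref{eq:A perp B C=0}.

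So the fallback plan is to prove (L1)--(L4) from (d). Symmetry is trivial. Decomposition follows by marginalizing the product kernel and the support condition. For weak union and contraction, I would split $\Lambda_V^o$ on $\{y_C\neq o_C\}$ versus $\{y_C=o_C\}$. On $\{y_C\neq o_C\}$, the kernels are genuine probability kernels on a $\sigma$-finite base, and the classical semigraphoid proofs apply pointwise in $y_C$. On $\{y_C=o_C\}$, the support conditions force at most one of $y_A$, $y_B$, $y_D$ to be nonzero, and the remaining bookkeeping checks that the required null sets propagate. For example, in weak union the set $\{y_C=o_C,\ y_D\neq o_D\}$ lies in the $\Lambda_{C\cup D}^o$-regular part, where $y_A$ and $y_D$ cannot both be nonzero.

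Alternatively, and perhaps more simply, the axioms may be cited directly from \cite{engelke2025graphical}, since (a)$\iff$(d) parallels their Theorem~4.4. The main difficulty then is only checking that their proof uses nothing beyond Borel disintegration and the localization assumptions present here.
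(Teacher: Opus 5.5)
Your proposal is correct, and the last option you list is what the paper actually does. Its proof invokes the argument of \cite[Theorem~5.3]{engelke2025graphical}, with Lemma~\ref{Lem:E1 equiv} identifying Assumption~\ref{Ass:puncture graph}(iv) with the explosiveness condition (E1) under which that theorem is stated. So besides Borel disintegration and localization, the item to check is that the reference's standing hypothesis (E1) holds, and that is exactly the role of Lemma~\ref{Lem:E1 equiv}.

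Your main fallback, deriving (L1)--(L4) from the kernel characterization (d), is a genuinely different and more self-contained route, and it works. Apply (a)$\iff$(d) to the marginals $\Lambda_W^o$, $W\subseteq V$, which inherit local finiteness and condition (iv). On $\{y_C\neq o_C\}$, run the classical arguments inside the probability measures $\Lambda_{A\cup B\cup D\mid C^o}(\cdot\mid y_C)$, using a.e.\ uniqueness of iterated disintegrations. On $\{y_C=o_C\}$, track null sets.

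One correction: on $\{y_C=o_C\}$ the hypotheses do not force at most one of $y_A,y_B,y_D$ to be nonzero. They only force $y_B=o_B$ and $y_D=o_D$ wherever $y_A\neq o_A$; both $y_B$ and $y_D$ may be nonzero when $y_A=o_A$. That weaker fact is all you need.
\begin{itemize}
\item In weak union, the kernel on $\{y_C=o_C,\,y_D\neq o_D\}$ has $A$-marginal $\delta_{o_A}$, so it factorizes trivially.
\item In contraction, the target null set $\{y_A\neq o_A,\,y_{B\cup D}\neq o_{B\cup D},\,y_C=o_C\}$ is the union of $\{y_A\neq o_A,\,y_B\neq o_B,\,y_C=o_C\}$, which is null by $A\perp B\mid C$, and $\{y_A\neq o_A,\,y_B=o_B,\,y_D\neq o_D,\,y_C=o_C\}$, which is null by the support condition in $A\perp D\mid B\cup C$.
\end{itemize}

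As for what each route buys: the citation is short but leaves the transfer to the abstract setting to the reader. Your route makes explicit that explosiveness enters only through the equivalence (a)$\iff$(d), at the price of some kernel bookkeeping.

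A lighter variant is also available. (L1)--(L3) hold test set by test set directly from Definition~\ref{Def:cond ind abs}, since $R'\times E_D$ is bounded whenever $R'$ is bounded in $E^o_{A\cup B\cup C}$. Only contraction needs a characterization, and with (c) the sets $R_{h,c}$, $c\in C$, serve both hypotheses at once.

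Finally, you were right to abandon the Poisson transfer. In general $\sigma(\xi_{B\cup D})$ is strictly larger than $\sigma(\xi_B,\xi_D)$, so weak union and contraction do not follow directly from the classical axioms applied to the projected processes.
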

\begin{proof}
This follows from the same argument as \cite[Theorem 5.3]{engelke2025graphical}, {noting   the equivalence pointed out by Lemma \ref{Lem:E1 equiv}.}   
\end{proof}

\printbibliography

\end{document}